\theoremstyle{thmstyleone}%
\newtheorem{thm}{Theorem}
\newtheorem{prop}{Proposition}
\theoremstyle{thmstyletwo}%
\newtheorem*{Exp}{Example:}%
\newtheorem*{rmk}{Remark:}%
\theoremstyle{thmstylethree}%
\newtheorem{definition}{Definition}%
\newtheorem{assmp}{Assumption}
\begin{document}

\title[Polytopic Superset Algorithm for Nonlinear Robust Optimization]{Polytopic Superset Algorithm for Nonlinear Robust Optimization}


\author*[1]{\fnm{Bowen} \sur{Li}}\email{bowen.li@anl.gov}

\author[1]{\fnm{Kibaek} \sur{Kim}}\email{kimk@anl.gov}

\author*[1]{\fnm{Sven} \sur{Leyffer}}\email{leyffer@anl.gov}

\affil[1]{\orgdiv{Mathematics and Computer Science Division}, \orgname{Argonne National Laboratory}, \orgaddress{ \city{Lemont}, \state{IL}, \country{USA}}}




\abstract{Nonlinear robust optimization (NRO) is widely used in different applications, including energy, control, and economics, to make robust decisions under uncertainty. One of the classical solution methods in NRO is an outer approximation method that iteratively solves a sample-based nonlinear problem and updates the sample set by solving an auxiliary problem subject to the uncertainty set. Although it guarantees convergence under certain assumptions, its solution iterates are generally infeasible in the original NRO problem, and it provides only a lower bound on the optimal objective value. We propose a new algorithm for a class of NRO problems that generates feasible solution iterates and provides both lower and upper bounds to the optimal objective value. In each iteration, the algorithm solves the reformulation of an NRO subproblem with respect to the polytopic supersets of the original uncertainty set and uses a cutting plane method to improve the supersets over iteration. If the NRO subproblem is infeasible, we provide a feasibility restoration step to detect whether the original NRO problem is infeasible or construct a new superset to restore the feasibility of the NRO subproblem. Further, we prove that our superset algorithm converges to the optimal solution of the original NRO problem. In numerical studies, we use application instances from portfolio optimization and production cost minimization and compare the performance between the superset algorithm and an outer approximation method called Polak's algorithm. Our result shows that the superset algorithm is more advantageous than Polak's algorithm when the number of robust constraints is large.}

\keywords{Robust optimization, Supersets, Cutting plane method}



\maketitle

\section{Introduction}\label{sec:intro}

We consider the following structured nonlinear robust optimization (NRO) problem, 
\begin{align}\label{eq:original_multi}
        \min_{x}\ f(x)\quad\mbox{s.t. } u_i^\top h_i(x)\leq b_i(x),\ \forall u_i\in\mathcal{U}_i,\ i=1,2,...,I,
\end{align}
where $x\in\mathbb{R}^n$ are the decision variables, $f(x):\mathbb{R}^n\rightarrow\mathbb{R}$ is the objective function, and $I$ denotes the number of robust constraints. For each constraint $i$, $u_i\in\mathbb{R}^{p_i}$ are the uncertain parameters of dimension $p_i$; $\mathcal{U}_i$ are the uncertainty sets; and $h_i(x):\mathbb{R}^n\rightarrow\mathbb{R}^{p_i}$ and $b_i(x):\mathbb{R}^n\rightarrow\mathbb{R}$ are the nonlinear functions on $x$.

For clarity and simplicity, we  consider only the following single-constrained NRO for the remainder of the paper: 
\begin{align}\label{eq:original}
        \min_{x}\ f(x)\quad\mbox{s.t. } u^\top h(x)\leq b(x),\ \forall u\in\mathcal{U},
\end{align}
where $\mathcal{U}:=\{u\in\mathbb{R}^{p}:\ g(u)\leq 0\}$ is the uncertainty set governed by function $g:\mathbb{R}^{p}\rightarrow\mathbb{R}^{m}$, where $m$ represents the number of constraints in uncertainty set $\mathcal{U}$. We denote the components of $g$ by $g_{j}:\mathbb{R}^{p}\rightarrow\mathbb{R}$ with $j\in J=\{1,...,m\}$. We also define the feasible set $\mathcal{F}:=\{x\in\mathbb{R}^n:\ u^\top h(x)\leq b(x),\ \forall u\in\mathcal{U}\}$. It is straightforward to extend the developed results of \eqref{eq:original} to the multiple-constrained NRO \eqref{eq:original_multi} or to include general nonlinear constraints. 

\begin{assmp}\label{asmp:form} 
We make the following assumptions for the NRO problem \eqref{eq:original}:
\begin{enumerate}
       \item For all $j\in J$, $g_{j}(u)$ are convex and continuously differentiable in $u$.
       
       \item The functions $f(x)$, $h(x)$, and $b(x)$ are continuously differentiable in $x$.

    
    \item The uncertainty set $\mathcal{U}$ is compact, nonempty, and the feasible set $\mathcal{F}$ is bounded.

\end{enumerate}
\end{assmp}

\begin{rmk}\label{rmk:assmp}
From Assumption~\ref{asmp:form}, because $g_j(u)$ are convex, we can efficiently check if set $\mathcal{U}$ is empty or not. If $\mathcal{U}=\emptyset$, then we have no robust constraint in \eqref{eq:original}. Otherwise, we conclude that $\mathcal{U}$ is a convex set; the gradient functions $\nabla g_j(u)$ are Lipschitz continuous in $u$ for all $j=1,...,m$. The set $\mathcal{F}$ is closed because it is an intersection of an infinite number of closed sets. From closedness and boundedness of $\mathcal{F}$, we further conclude that $\mathcal{F}$ is a compact set, which implies that \eqref{eq:original} has a solution if $\mathcal{F}$ is nonempty. We will discuss more about the feasibility issue in Section~\ref{rmk:feas}.
\end{rmk}

NRO has many important applications, of which we select two as our motivation for this paper. One instance of NRO is in portfolio optimization \cite{port1,port2}. This problem maximizes the risk-adjusted expected return under the uncertainty of the mean and covariance of the asset returns by finding the optimal asset allocation. Different models of the uncertainty set are discussed in \cite{port2}. The dimension of the uncertain parameter is quadratic in the number of assets. Another example of NRO arises in production cost minimization \cite{prod_cost}. Under the uncertainty of price, this problem minimizes the production cost as well as the cost associated with production ramping while satisfying the daily demand requirements and ramping limitations. The number of robust constraints is proportional to the scheduling horizon. All the conditions in Assumption~\ref{asmp:form} are satisfied by these numerical instances.

Different iterative methods have been developed to solve NRO and can be broadly categorized into discretization methods \cite{dis1,tut4}, exchange methods \cite{outer1, polak1997,polak_boyd,polak3}, and local reduction methods \cite{tut1, tut3, tut5}. In this work, our benchmark is an outer approximation method called Polak's algorithm \cite{polak1997, polak_boyd} in the category of the exchange methods. Polak's algorithm iteratively solves a sample-based subproblem that approximates the uncertainty set $\mathcal{U}$ of the original NRO problem \eqref{eq:original} by a finite sample set. Because the finite sample set is a subset of the uncertainty set $\mathcal{U}$, the sample-based subproblem can be seen as an outer approximation of \eqref{eq:original}. As the iteration progresses, new sample points are obtained by solving a worst-case constraint violation problem and are accumulated to be new constraints of the next-iteration subproblem. Before termination, the resulting solution iterates of Polak's algorithm are not feasible in the original NRO problem \eqref{eq:original} and  provide only a lower bound on the optimal objective value of \eqref{eq:original}. 

The major benefit of the affine relationship between the constraint $h(x)$ and uncertainty parameter $u$ in \eqref{eq:original} is the existence of a reformulation approach.  If we approximate the uncertainty set $\mathcal{U}$ with a polytopic superset and apply linear programming duality to the robust constraints, then the resulting NRO subproblem can be exactly reformulated into a finite-dimensional nonlinear problem similar to \cite{nonlin1, rob_book}, which can be solved by off-the-shelf nonlinear optimization solvers. Hence, we propose a superset algorithm that iteratively solves the reformulation of the NRO subproblem with polytopic supersets and aims to improve the solution quality by gradually constructing better supersets. We show that our proposed superset algorithm follows an  iterative structure similar to Polak's algorithm \cite{polak_boyd} but generates feasible iterates.


Our contribution is the development of a superset algorithm that iteratively solves the reformulation of an NRO subproblem with polytopic supersets of the original uncertainty set. We propose different cutting plane methods to improve the supersets iteratively. Compared with Polak's algorithm \cite{polak1997,polak_boyd}, the solution iterates of our algorithm are feasible in the original NRO problem and provide both upper and lower bounds to the optimal objective value. We show that with the proposed cutting plane methods, the solution of the NRO subproblem with polytopic supersets converges to the optimal solution of the original NRO problem. In addition, we provide a feasibility restoration algorithm to restore the feasibility if the initial supersets are overly conservative or detect whether the original NRO problem is infeasible. To demonstrate the computation performance, we compare the superset algorithm with Polak's algorithm with test instances from portfolio optimization and production cost minimization. We show that the superset algorithm outperforms Polak's algorithm in most of the test cases, especially when the number of robust constraints is large. 

The outline of this paper is as follows. Section~\ref{sec:background} discusses  theoretical background about general NRO problems. Section~\ref{sec:new} discusses the benefits of using polytopic supersets and gives a detailed description and theoretical analysis of our proposed superset algorithm. Section~\ref{sec:conv} gives the convergence analysis, and Section~\ref{sec:numerical} shows the numerical results of comparing the superset algorithm with Polak's algorithm in portfolio optimization and production cost minimization. Section~\ref{sec:conclusion} summarizes our work and briefly discusses future plans. 

\section{Background}\label{sec:background}
In this section we give some theoretical results from the literature as background. For simplicity, we define the following constants and notations for the rest of the paper. First, for the convergence analysis in Section~\ref{sec:conv}, we define a unified Lipschitz constant $L$ for all the gradient functions $\nabla g_j(u)$ and radius $R$ for the bounded set $\mathcal{U}$. Second, we define the following notation rules: 
For a finite set $U$, $\lvert U \rvert$ denotes its number of elements; and for a closed set $\mathcal{U}$, $\partial\mathcal{U}$ denotes its boundary. We define $[x]^+=\max\{x,0\}$. For a vector $x$, we define $x\nleq 0$ if there exists a component $x_i$ such that $x_i>0$. 

\subsection{General Properties of Robust Optimization}\label{sec:cq}
Here we recall some of the main theoretical results regarding the optimal solution of \eqref{eq:original}. Given $x\in\mathcal{F}$, we define the active set $\mathcal{A}(x):=\{u\in\mathcal{U}:\ u^\top h(x)=b(x)\}$ and Jacobian of $h(x)$ as $\nabla h(x)=[\nabla h_1(x),...,\nabla h_p(x)]$. Next, we give the definition of the Mangasarian--Fromovitz constraint qualification (MFCQ) of \eqref{eq:original}:
\begin{definition}[Mangasarian--Fromovitz Constraint Qualification \cite{emfcq1,emfcq2, emfcq3}]\label{def:emfcq}
We say that $x^*$ satisfies the MFCQ if there exists $s\in\mathbb{R}^n$ such that  
\begin{align*}
        u^\top h(x^*)-b(x^*)+s^\top \left(\nabla h(x^*) u-\nabla b(x^*)\right)<0,\ \forall u\in \mathcal{U}.
\end{align*}    
\end{definition}
Now with MFCQ we give the first-order condition \cite{tut1, tut2, sven_report}  as follows. 
\begin{thm}\label{thm:opt1}
Suppose $x^*\in\mathcal{F}$ satisfies MFCQ. If $x^*$ is a local minimizer of \eqref{eq:original}, then there exist a finite subset $A^*\subseteq \mathcal{A}(x^*)$ and multipliers $\lambda^*_u\geq 0$ for each $u\in A^*$  such that
\begin{align}\label{eq:kkt1}
    \nabla f(x^*)+\sum_{u\in A^*}\lambda^*_u\left(\nabla h(x^*) u-\nabla b(x^*)\right)=0.
\end{align}
\end{thm}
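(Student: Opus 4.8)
The plan is to collapse the semi-infinite family of constraints into a single constraint function and to show that a local minimizer of \eqref{eq:original} admits no feasible descent direction, from which \eqref{eq:kkt1} follows by a separating-hyperplane argument and Carath\'eodory's theorem. Set $\psi(x):=\max_{u\in\mathcal{U}}\bigl(u^\top h(x)-b(x)\bigr)$. By Assumption~\ref{asmp:form}, $\mathcal{U}$ is nonempty and compact and $(x,u)\mapsto u^\top h(x)-b(x)$ is continuous with jointly continuous $x$-gradient $\nabla h(x)u-\nabla b(x)$, so $\psi$ is finite-valued and continuous and, by Danskin's theorem, directionally differentiable with
\[
  \psi'(x;s)=\max_{u\in\mathcal{M}(x)}\, s^\top\bigl(\nabla h(x)u-\nabla b(x)\bigr),
\]
where $\mathcal{M}(x)$ denotes the (compact) set of maximizers defining $\psi(x)$. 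Note $x\in\mathcal{F}\iff\psi(x)\le 0$, and for $x^*\in\mathcal{F}$ with $\psi(x^*)=0$ we have $\mathcal{M}(x^*)=\mathcal{A}(x^*)$, a closed hence compact subset of $\mathcal{U}$. If instead $\psi(x^*)<0$ then $\mathcal{A}(x^*)=\emptyset$ and $x^*$ is locally an unconstrained minimizer, so $\nabla f(x^*)=0$ and \eqref{eq:kkt1} holds trivially with $A^*=\emptyset$; hence I assume $\mathcal{A}(x^*)\ne\emptyset$.

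First I would exploit the MFCQ direction $\bar s$ of Definition~\ref{def:emfcq}: since $u^\top h(x^*)-b(x^*)=0$ on $\mathcal{A}(x^*)$, it gives $\bar s^\top(\nabla h(x^*)u-\nabla b(x^*))<0$ for every $u\in\mathcal{A}(x^*)$, and compactness of $\mathcal{A}(x^*)$ yields $\delta:=-\max_{u\in\mathcal{A}(x^*)}\bar s^\top(\nabla h(x^*)u-\nabla b(x^*))>0$. Next, suppose for contradiction that \eqref{eq:kkt1} fails, i.e.\ $-\nabla f(x^*)$ lies outside the convex cone $C$ generated by $G:=\{\nabla h(x^*)u-\nabla b(x^*):u\in\mathcal{A}(x^*)\}$; since $G$ is compact (a continuous image of $\mathcal{A}(x^*)$) and, by the previous step, $0\notin\operatorname{conv}G$, the cone $C$ is closed. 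Separating $-\nabla f(x^*)$ from $C$ produces $s$ with $s^\top\nabla f(x^*)<0$ and $s^\top(\nabla h(x^*)u-\nabla b(x^*))\le 0$ for all $u\in\mathcal{A}(x^*)$. I would then set $s_\varepsilon:=s+\varepsilon\bar s$ with $\varepsilon>0$ small enough that $s_\varepsilon^\top\nabla f(x^*)<0$; for every $u\in\mathcal{A}(x^*)$ one gets $s_\varepsilon^\top(\nabla h(x^*)u-\nabla b(x^*))\le-\varepsilon\delta<0$, hence $\psi'(x^*;s_\varepsilon)\le-\varepsilon\delta<0$. Consequently, for small $t>0$, $\psi(x^*+ts_\varepsilon)=t\,\psi'(x^*;s_\varepsilon)+o(t)<0$ and $f(x^*+ts_\varepsilon)=f(x^*)+t\,s_\varepsilon^\top\nabla f(x^*)+o(t)<f(x^*)$, so $x^*+ts_\varepsilon\in\mathcal{F}$ with strictly smaller objective, contradicting local minimality. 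Therefore $-\nabla f(x^*)\in C$.

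Finally, since $-\nabla f(x^*)\in C=\operatorname{cone}G\subseteq\mathbb{R}^n$, the conic Carath\'eodory theorem expresses it as a nonnegative combination of at most $n$ elements of $G$; taking $A^*\subseteq\mathcal{A}(x^*)$ to be the finite set of the corresponding uncertainty vectors and $\lambda^*_u\ge 0$ the associated coefficients yields \eqref{eq:kkt1}. I expect the main obstacle to be the passage to the scalar constraint $\psi$: one must justify the Danskin-type directional-derivative formula and, crucially, use the precise form of MFCQ to obtain the \emph{uniform} strict decrease $\psi'(x^*;s_\varepsilon)\le-\varepsilon\delta$ over the possibly infinite active set $\mathcal{A}(x^*)$, which is exactly what both closes the cone $C$ and certifies feasibility of the step $ts_\varepsilon$. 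The remaining ingredients — the separating-hyperplane argument, the first-order expansions in $t$, and the conic Carath\'eodory reduction to a finite $A^*$ with nonnegative multipliers — are routine.
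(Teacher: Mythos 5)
Your proof is correct. Note, however, that the paper itself does not prove Theorem~\ref{thm:opt1}: it is stated as a known first-order optimality condition for semi-infinite programs and simply cited from the literature, so there is no in-paper argument to compare against. Your route --- collapsing the constraint family into $\psi(x)=\max_{u\in\mathcal{U}}(u^\top h(x)-b(x))$, applying Danskin's theorem, using the MFCQ direction both to get the uniform margin $\delta>0$ on the compact active set and to certify $0\notin\operatorname{conv}G$ (hence closedness of the generated cone), separating, and finishing with conic Carath\'eodory --- is the standard textbook derivation, and all the delicate steps (compactness of $\mathcal{A}(x^*)$, closedness of $\operatorname{cone}G$, the one-sided expansion $\psi(x^*+ts_\varepsilon)=t\,\psi'(x^*;s_\varepsilon)+o(t)$, and the trivial case $\mathcal{A}(x^*)=\emptyset$) are handled properly. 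The only cosmetic remark is that \eqref{eq:kkt1} as written asks for $-\nabla f(x^*)$ to be a finite nonnegative combination of elements of $G$, which is exactly membership in the convex cone generated by $G$, so your contradiction hypothesis is indeed the precise negation of the claim.
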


Theorem~\ref{thm:opt1} motivates the error measure $errm(x,\lambda_u,A)$ to evaluate the first-order condition error given solution estimate $x$, a finite set $A$, and the corresponding multipliers $\lambda_u$ for all $u\in A$:

\begin{subequations}\label{eq:robkkt_err}
\begin{align}
    errm(x,\lambda_u,A)&=\left\|\nabla f(x) + \sum_{u\in {A}}\lambda_u\left(\nabla h(x) u -\nabla b(x)\right)\right\|_2 \label{eq:7a}\\
    &+ \left[\max_{u\in\mathcal{U}}u^\top h(x) - b(x)\right]^+  \label{eq:7b}\\
    &+ \sum_{u\in{A}}\ [-\lambda_u]^+ \label{eq:7c}\\
    &+ \sum_{u\in{A}}\ \lvert u^\top h(x) - b(x)\rvert \ + \sum_{u\in {A}}\ \|u-\text{proj}_{\mathcal{U}}(u)\|_2 \label{eq:7d}.
\end{align}
\end{subequations}
The function $\text{proj}_{\mathcal{U}}()$ can be any projection operator to the set $\mathcal{U}$. Without loss of generality, we use the Euclidean projection and define
\begin{align}\label{eq:eud}
    \text{proj}_{\mathcal{U}}(x) = \text{argmin}_{u\in\mathcal{U}}\ \|u-x\|_2^2.
\end{align}
Comparing \eqref{eq:robkkt_err} with Theorem~\ref{thm:opt1}, we observe that \eqref{eq:7a} corresponds to the first-order condition \eqref{eq:kkt1}, \eqref{eq:7b} corresponds to primal feasibility, \eqref{eq:7c} corresponds to the non-negativity of the Lagrangian multipliers, and \eqref{eq:7d} checks the activity condition $A\subseteq\mathcal{A}(x)$. We note that $A\subseteq\mathcal{A}(x)$ if and only if \eqref{eq:7d} equals zero.

\subsection{Polak's Algorithm}
As a benchmark, we also consider an outer approximation method, namely, Polak's algorithm \cite{sven_report,polak1997, polak_boyd}. The algorithm is based on a sample-based outer approximation of \eqref{eq:original}. At iteration $k$, given a finite sample set $U_{k}\subseteq\mathcal{U}$, the algorithm solves the following finite-dimensional nonlinear problem:
\begin{align}\label{eq:Polak_fin}
        \min_{x}\ f(x)\quad \mbox{s.t. } u^\top h(x)\leq b(x),\ \forall u\in U_{k}.
\end{align}
Because we approximate $\mathcal{U}$ with a finite sample set $U_{k}$, \eqref{eq:Polak_fin} is a relaxation of \eqref{eq:original}. The solution of \eqref{eq:Polak_fin} is not guaranteed to be feasible in \eqref{eq:original} and provides only a lower bound on the optimal objective value of \eqref{eq:original}. In each iteration $k$, to improve the current solution $x_k$, we solve the following problem:
\begin{align}\label{eq:feas_err}
        \max_{t, u\in\mathcal{U}}\ t \quad \mbox{s.t. } u^\top h(x_k) - b(x_k)\geq t.
\end{align}
Let $\hat{u}_{k}$ and $t_{k}$ be the solution and optimal objective value of \eqref{eq:feas_err}, respectively. Clearly, $t_{k} \leq 0$ if and only if $x_k\in\mathcal{F}$. When $t_{k}>0$, it corresponds to the worst-case constraint violation at uncertainty realization $\hat{u}_{k}$. In this case, $\hat{u}_{k}$ will be added to the sample set $U_k$ and generates a new constraint in the subproblem of the next iteration. As the sample size increases, the algorithm terminates when $t_{k}\leq \epsilon$ for some tolerance $\epsilon>0$. The algorithmic steps are summarized in Algorithm~\ref{alg:Polak}.


\begin{algorithm}[H]
\begin{algorithmic}
\State {\bf initialization:} Set $k\gets 0$; initialize $U_{0}\gets\emptyset$ and some $\epsilon>0$.
    
\Repeat
\State {\bf sample-based subproblem:} Let $x_{k}$ be the solution of \eqref{eq:Polak_fin}.

\State {\bf update sample set:} Let $\hat{u}_{k}$ and $t_{k}$ be the solution of \eqref{eq:feas_err}.

\State Set $U_{k+1}\gets U_{k}\cup \{\hat{u}_{k}\}$.

\State Set $k\gets k+1$.
   
\Until{$t_{k-1}\leq \epsilon$.}
    
\State {\bf return} $x_{k-1}$.
    
\caption{Polak's Algorithm}\label{alg:Polak}
\end{algorithmic}
\end{algorithm}
    
Note that when $k=0$, \eqref{eq:Polak_fin} can be unbounded. In practical applications, however,  deterministic constraints or the inclusion of a nominal sample can resolve this issue without loss of generality. By applying the proof from \cite{polak_boyd} to \eqref{eq:original}, Algorithm~\ref{alg:Polak} is guaranteed to converge.  

\section{Polytopic Superset Algorithm}\label{sec:new}

In this section we propose a new polytopic superset algorithm and discuss its details. Specifically, in Section~\ref{sec:sketch} we define a polytopic superset NRO; and, based on this, we give an algorithm sketch to the superset algorithm. In Section~\ref{sec:ref} we develop a reformulation approach given the affine relationship between $u$ and $h(x)$ in \eqref{eq:original} to efficiently solve the polytopic superset NRO. In Section~\ref{sec:proj_cuts}, we discuss the cutting plane methods that we use to improve the polytopic superset NRO. In Section~\ref{sec:termination} we extend the error measure \eqref{eq:robkkt_err} to the proposed polytopic superset algorithm and use it as the termination criteria. In Section~\ref{rmk:feas} we develop a feasibility restoration step to either detect whether \eqref{eq:original} is infeasible or construct an appropriate superset that guarantees the feasibility of the polytopic superset NRO. In Section~\ref{sec:full_alg} we give the full algorithmic steps of the polytopic superset algorithm and provide some theoretical analysis.  

\subsection{Sketch of a Polytopic Superset Algorithm}\label{sec:sketch}

In each iteration $k$, we define the following NRO subproblem using a polytopic superset $\mathcal{S}_{k}\supseteq\mathcal{U}$ of the current iteration: 
\begin{align}\label{eq:supsub}
        \min_{x}\ f(x)\quad\mbox{s.t. } u^\top h(x)\leq b(x),\ \forall u\in\mathcal{S}_{k}.
\end{align}

Based on \eqref{eq:supsub}, we propose the following superset algorithm. 

\begin{algorithm}[H]
\begin{algorithmic}
    
\State {\bf initialization:} Set $k\gets0$; initialize a box superset $\hat{\mathcal{S}}\supseteq\mathcal{U}$ and some $\epsilon>0$.

\State {\bf feasibility restoration:} Detect if \eqref{eq:original} is infeasible, or construct a new superset $\mathcal{S}_0$ from $\hat{\mathcal{S}}$ that guarantees the feasibility of \eqref{eq:supsub}.

\Repeat

\State {\bf superset subproblem:} Let $x_k$  be the solution of \eqref{eq:supsub}.
      
\State {\bf update superset:} Update $\mathcal{S}_{k}$ by adding cutting planes.

\State Set $k\gets k+1$.

\Until $x_{k-1}$ is optimal to \eqref{eq:original} for some tolerance $\epsilon$.       
     
\State {\bf return} $x_{k-1}$.
\end{algorithmic}
\caption{Sketch of the Superset Algorithm}\label{alg:sw_sketch}
\end{algorithm}
We initialize the algorithm using a box superset denoted  $\hat{\mathcal{S}}$ that contains the compact uncertainty set $\mathcal{U}$. By construction, the set $\hat{\mathcal{S}}$ is compact. As the first step, we use feasibility restoration to either provide a certificate of infeasibility to \eqref{eq:original} or construct a superset $\mathcal{S}_0$ based on $\hat{\mathcal{S}}$ such that \eqref{eq:supsub} is guaranteed to be feasible when $k=0$ (see Section~\ref{rmk:feas}). Over the iteration, we keep solving the NRO subproblem \eqref{eq:supsub} and update the superset with cutting planes generated using $x_k$ (see Section~\ref{sec:proj_cuts}). These cutting planes help reduce the gap between the superset $\mathcal{S}_k$ and the uncertainty set $\mathcal{U}$. The algorithm terminates when $x_k$ is optimal to \eqref{eq:original} with some given tolerance $\epsilon$.

Because $\mathcal{S}_{k}\supseteq \mathcal{U}$ for all $k$, all iterates of \eqref{eq:supsub} are also feasible in \eqref{eq:original} and provide an upper bound on the optimal objective value of \eqref{eq:original}. In other words, the NRO subproblem \eqref{eq:supsub} can be seen as an inner approximation of \eqref{eq:original}, which is distinct from the classical outer approximation methods \cite{polak1997, polak_boyd, polak3}. In Section~\ref{sec:full_alg} we will discuss how the superset algorithm also provides a lower bound on the optimal objective value of \eqref{eq:original} through the Euclidean projection used in the termination criteria.

\subsection{Reformulation of the Polytopic Superset NRO}\label{sec:ref}
In this section we develop a reformulation approach to efficiently solve \eqref{eq:supsub}. First, we model the polytopic superset for each iteration $k$ using linear inequalities. We define $\mathcal{S}_{k}:=\{u:\ B_{k} u\leq d_{k}\}$ with linear coefficients $B_{k}\in\mathbb{R}^{b_{k} \times p}$ and $d_{k}\in \mathbb{R}^{b_{k}}$, where $b_{k}$ denotes the number of linear inequality constraints in $\mathcal{S}_{k}$. In our algorithm, $B_k u\leq d_k$ corresponds to a set of supporting hyperplanes for the convex uncertainty set $\mathcal{U}$, and we show in Section~\ref{sec:proj_cuts} how to update $\mathcal{S}_k$. Because of the affine relationship between $h(x)$ and $u$ in \eqref{eq:supsub}, we have the following proposition by applying the linear programming duality to the robust constraint.

\begin{prop}\label{prop:robref}
  Given polytopic supersets $\mathcal{S}_{k}:=\{u\in\mathbb{R}^{p}:\ B_{k}u\leq d_{k}\}$, problem \eqref{eq:supsub}
can be reformulated into the following finite-dimensional nonlinear optimization problem:
\begin{subequations}\label{eq:nonlin}
\begin{align}
        \min_{x,\gamma_{k}}\ &f(x)\\
        \mbox{s.t. }&\gamma_{k}^\top d_{k}\leq b(x)\label{eq:non1},\\
        &(B_{k})^\top\gamma_{k}=h(x)\label{eq:non2},\\
        &\gamma_{k}\geq 0.\label{eq:non3}
\end{align}
\end{subequations}
\end{prop}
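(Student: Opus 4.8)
The plan is to reduce the semi-infinite constraint in \eqref{eq:supsub} to a worst-case inequality and then to dualize the resulting inner linear program in $u$. Fix $x$. The robust constraint $u^\top h(x)\leq b(x)$ for all $u\in\mathcal{S}_{k}$ holds if and only if
\begin{align*}
        \phi_{k}(x):=\max_{u}\ h(x)^\top u\quad\mbox{s.t. }B_{k}u\leq d_{k}
\end{align*}
satisfies $\phi_{k}(x)\leq b(x)$. Since $\mathcal{U}\subseteq\mathcal{S}_{k}$ and $\mathcal{U}$ is nonempty by Assumption~\ref{asmp:form} and Remark~\ref{rmk:assmp}, $\mathcal{S}_{k}$ is nonempty; and since $\mathcal{S}_{k}$ is obtained from the box $\hat{\mathcal{S}}$ only by adding cutting planes, $\mathcal{S}_{k}\subseteq\hat{\mathcal{S}}$ is bounded, hence a polytope. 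Therefore the inner problem defining $\phi_{k}(x)$ is feasible and bounded above, so by the fundamental theorem of linear programming its optimum is finite and attained.

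Next I would apply strong linear programming duality to the inner problem, giving
\begin{align*}
        \phi_{k}(x)=\min_{\gamma_{k}\geq 0}\ d_{k}^\top\gamma_{k}\quad\mbox{s.t. }(B_{k})^\top\gamma_{k}=h(x),
\end{align*}
with the right-hand side attained whenever it is feasible. Consequently $\phi_{k}(x)\leq b(x)$ is equivalent to the existence of $\gamma_{k}\geq 0$ with $(B_{k})^\top\gamma_{k}=h(x)$ and $d_{k}^\top\gamma_{k}\leq b(x)$, which are precisely constraints \eqref{eq:non1}--\eqref{eq:non3}. The one case requiring care is when no $\gamma_{k}\geq 0$ with $(B_{k})^\top\gamma_{k}=h(x)$ exists at all: then the dual is infeasible, so the (feasible) primal is unbounded, i.e.\ $\phi_{k}(x)=+\infty$ and $x$ violates the robust constraint; both sides of the equivalence then fail together, so nothing is lost or gained. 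Substituting this characterization of $\mathcal{F}_{k}:=\{x:\ u^\top h(x)\leq b(x)\ \forall u\in\mathcal{S}_{k}\}$ into \eqref{eq:supsub}, the feasible $x$ of \eqref{eq:supsub} are exactly the projections onto the $x$-space of the feasible set of \eqref{eq:nonlin}; since $f$ does not depend on $\gamma_{k}$, the two problems share the same optimal value and the same set of optimal $x$. Finite-dimensionality is immediate because $\gamma_{k}\in\mathbb{R}^{b_{k}}$.

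The routine content is the dualization itself; I expect the main obstacle to be the bookkeeping around the inner LP, namely verifying nonemptiness and boundedness of $\mathcal{S}_{k}$ so that strong duality yields a finite, attained value, and handling the dual-infeasible case so that the passage from \eqref{eq:supsub} to \eqref{eq:nonlin} is an exact reformulation rather than merely a relaxation or restriction.
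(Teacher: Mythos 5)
Your proposal is correct and follows essentially the same route as the paper's proof: rewrite the semi-infinite constraint as $\max_{u\in\mathcal{S}_k}u^\top h(x)\leq b(x)$ and then apply strong linear programming duality to the inner maximization. You are somewhat more careful than the paper in verifying that $\mathcal{S}_k$ is nonempty and bounded (so the inner LP is solvable and duality is tight) and in handling the dual-infeasible case, but these are refinements of the same argument rather than a different approach.
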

\begin{proof}
    The proof can be obtained by generalizing the reformulation approach used for a robust linear program with a polytopic uncertainty set \cite{nonlin1, rob_book} to our structured nonlinear problem \eqref{eq:supsub}. First, because $\mathcal{S}_k$ is compact, we have
    \begin{align}\label{eq:dual1_1}
    \max_{u\in \mathcal{S}_k} u^\top h(x_k)\leq b(x_k)\Leftrightarrow\ u^\top h(x_k)\leq b(x_k),\ \forall u\in\mathcal{S}_k.
\end{align}

Next, by linear programming duality, we have 
\begin{align}\label{eq:dual_1}
    \min_{\gamma\geq0, (B_{k})^\top\gamma=h(x_k)}\gamma^\top d_{k} = \max_{u\in \mathcal{S}_k}u^\top h(x_k).
\end{align}

Substituting the robust constraint in \eqref{eq:supsub} in the left-hand side of \eqref{eq:dual_1}, we obtain reformulation \eqref{eq:nonlin}. 
\end{proof}
Our superset algorithm iteratively solves  reformulation \eqref{eq:nonlin} of the NRO subproblem \eqref{eq:supsub} with polytopic supersets. The reformulation is a finite-dimensional nonconvex problem that can be solved by off-the-shelf nonlinear optimization solvers. 
From Proposition~\ref{prop:robref}, \eqref{eq:supsub} and \eqref{eq:nonlin} are equivalent in the sense that they have the same global optimal solution. Unfortunately, even if \eqref{eq:supsub} is convex,  $h(x)$ is convex and $b(x)$ is concave, the reformulation \eqref{eq:supsub} of \eqref{eq:nonlin} is nonconvex. 
However, we show next, that we can at least recover the equivalence of stationary points in this reformulation.
In the next theorem we show that the stationary points of \eqref{eq:nonlin} satisfy the first-order condition \eqref{eq:kkt1} of the infinite-dimensional NRO subproblem \eqref{eq:supsub}. 


\begin{thm}\label{thm:main}
If $(x_k,\gamma_{k})$ satisfies the KKT conditions of \eqref{eq:nonlin}, then $x_k$ also satisfies the first-order condition \eqref{eq:kkt1} (applied to \eqref{eq:supsub}).
\end{thm}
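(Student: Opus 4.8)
The plan is to write down the KKT conditions of the finite-dimensional reformulation \eqref{eq:nonlin}, extract from them a finite subset of active uncertainty realizations together with their multipliers, and verify that these satisfy the first-order condition \eqref{eq:kkt1} applied to \eqref{eq:supsub}. First I would introduce multipliers for the three constraint groups of \eqref{eq:nonlin}: a scalar $\mu\geq 0$ for the inequality $\gamma_k^\top d_k\leq b(x)$ in \eqref{eq:non1}, a vector $\nu\in\mathbb{R}^p$ for the equality $(B_k)^\top\gamma_k=h(x)$ in \eqref{eq:non2}, and a vector $\sigma\geq 0$ for the sign constraint $\gamma_k\geq 0$ in \eqref{eq:non3}. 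Stationarity in $x$ gives $\nabla f(x_k)+\mu\bigl(\nabla h(x_k)\nu-\nabla b(x_k)\bigr)=0$ (using that $h$ enters \eqref{eq:non2} and $b$ enters \eqref{eq:non1}), and stationarity in $\gamma_k$ gives $\mu\, d_k-B_k\nu-\sigma=0$, together with complementarity $\mu(\gamma_k^\top d_k-b(x_k))=0$ and $\sigma_\ell (\gamma_k)_\ell=0$ for each $\ell$.

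The key identification is that $\nu$, after suitable scaling, plays the role of an active uncertainty realization. From the $\gamma_k$-stationarity, $B_k\nu=\mu d_k-\sigma\leq \mu d_k$; if $\mu>0$ this says $B_k(\nu/\mu)\leq d_k$, i.e. $u^\ast:=\nu/\mu\in\mathcal{S}_k$. Complementarity $\sigma_\ell(\gamma_k)_\ell=0$ combined with $\mu d_k-B_k\nu=\sigma\geq 0$ shows that wherever $(\gamma_k)_\ell>0$ the hyperplane $\ell$ is tight at $u^\ast$, so by LP complementary slackness $u^\ast$ is a maximizer of $u^\top h(x_k)$ over $\mathcal{S}_k$; combined with primal feasibility \eqref{eq:non1} and complementarity $\mu(\gamma_k^\top d_k-b(x_k))=0$ this yields $(u^\ast)^\top h(x_k)=b(x_k)$, so $u^\ast\in\mathcal{A}(x_k)$ relative to \eqref{eq:supsub}. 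Then the $x$-stationarity reads $\nabla f(x_k)+\mu\bigl(\nabla h(x_k)u^\ast-\nabla b(x_k)\bigr)=0$, which is exactly \eqref{eq:kkt1} with the finite active set $A^\ast=\{u^\ast\}$ and multiplier $\lambda^\ast_{u^\ast}=\mu\geq 0$.

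It remains to handle the degenerate case $\mu=0$. Here $x$-stationarity already gives $\nabla f(x_k)=0$, so \eqref{eq:kkt1} holds trivially with $A^\ast=\emptyset$ — I would simply point this out. The main obstacle I anticipate is the bookkeeping in the intermediate step: carefully arguing via LP complementary slackness that the $\gamma_k$ returned by the reformulation encodes a genuine worst-case $u^\ast\in\mathcal{S}_k$ that is active for $x_k$, i.e. translating the primal-dual optimality of the inner LP (the equality in \eqref{eq:dual_1} from Proposition~\ref{prop:robref}) into the activity statement $u^\ast\in\mathcal{A}(x_k)$. One should also note that stationary points of \eqref{eq:nonlin} exist and the manipulation is valid as long as a constraint qualification holds for \eqref{eq:nonlin}; since $B_k$, $d_k$ are fixed data and \eqref{eq:non2}, \eqref{eq:non3} are affine, this is mild, and I would remark on it rather than belabor it.
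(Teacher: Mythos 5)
Your main case ($\mu>0$) follows the paper's argument essentially verbatim: you form the KKT system of \eqref{eq:nonlin}, rescale the equality-constraint multiplier to obtain $u^\ast=\nu/\mu\in\mathcal{S}_k$, use complementarity to show $(u^\ast)^\top h(x_k)=b(x_k)$, and read off \eqref{eq:kkt1} with $A^\ast=\{u^\ast\}$ and $\lambda^\ast_{u^\ast}=\mu$. That part is correct. However, your statement of $x$-stationarity, $\nabla f(x_k)+\mu\bigl(\nabla h(x_k)\nu-\nabla b(x_k)\bigr)=0$, is mis-normalized: with $\nu$ the (free) multiplier of the equality \eqref{eq:non2}, the correct condition is $\nabla f(x_k)+\nabla h(x_k)\nu-\mu\nabla b(x_k)=0$, and the factor $\mu$ only appears in front of $\nabla h(x_k)u^\ast$ \emph{after} the substitution $u^\ast=\nu/\mu$. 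Your $\gamma_k$-stationarity $\mu d_k-B_k\nu-\sigma=0$ uses the un-scaled $\nu$, so the two conditions as you wrote them are mutually inconsistent; the argument survives only because you implicitly revert to the correct form when you divide by $\mu$.

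This slip becomes a genuine gap in the degenerate case $\mu=0$. You claim that $x$-stationarity ``already gives $\nabla f(x_k)=0$,'' but that is only true of your premultiplied (incorrect) form. The correct stationarity with $\mu=0$ reads $\nabla f(x_k)+\nabla h(x_k)\nu=0$, and one must still prove $\nu=0$. This is where the paper does real work: from $\gamma_k$-stationarity with $\mu=0$ one gets $B_k\nu=-\sigma\leq 0$, and if $\nu\neq 0$ then $\hat u+\alpha\nu\in\mathcal{S}_k$ for every $\hat u\in\mathcal{S}_k$ and every $\alpha>0$, contradicting the compactness of $\mathcal{S}_k$ (equivalently, $B_k$ has full column rank and a trivial recession cone). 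Hence $\nu=0$ and only then $\nabla f(x_k)=0$. You need to add this compactness argument; without it the degenerate case is unproven.
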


\begin{proof}
First, we analyze the KKT conditions for \eqref{eq:nonlin} where $\mu_{k}$, $v_{k}$, and $\nu_{k}$ are the corresponding multipliers for \eqref{eq:non1}, \eqref{eq:non2}, and \eqref{eq:non3}, respectively:
\begin{subequations}
\begin{align}
    &\nabla f(x_k)+\nabla h(x_k) v_{k} -\mu_{k} \nabla b(x_k)=0,\label{stt1}\\
    &-B_{k} v_{k}+\mu_{k} d_{k}-\nu_{k} =0,\label{stt2}\\
    &\mu_{k}\geq0,\ \nu_{k}\geq 0,\ \mu_{k}\left(\gamma_{k}^\top d_{k}-b(x_k)\right)=0,\ \nu_{k}^\top\gamma_{k}=0,\label{stt3}\\
    &\gamma_{k}^\top d_{k}\leq b(x_k),\ \gamma_{k}\geq0,\ B_{k}^\top\gamma_{k}=h(x_k).\label{stt4}
\end{align}
\end{subequations}
When $\mu_{k}>0$, we have that 
\begin{subequations}\label{eq:cons1}
\begin{align}
    \eqref{stt3}&\Rightarrow\ \gamma_{k}^\top d_{k}=b(x_k),\\
    \eqref{stt2}\mbox{ and }\eqref{stt4}&\Rightarrow\ B_{k}(\frac{v_{k}}{\mu_{k}})=d_{k}-\frac{\nu_{k}}{\mu_{k}}\Rightarrow B_{k}(\frac{v_{k}}{\mu_{k}})\leq d_{k}\Leftrightarrow \frac{v_{k}}{\mu_{k}}\in\mathcal{S}_{k}\label{eq:13b}.
\end{align}
\end{subequations}
Further, we have
\begin{align} \label{eq:cons2}
    \eqref{eq:13b}\Rightarrow \gamma_{k}^\top B_{k}(\frac{v_{k}}{\mu_{k}})=\gamma_{k}^\top d_{k}-\gamma_{k}^\top(\frac{\nu_{k}}{\mu_{k}})\Rightarrow (\frac{\nu_{k}}{\mu_{k}})^\top h(x_k)=b(x_k),
\end{align}
where the second implication comes from  \eqref{stt3} and \eqref{stt4}. Further, from \eqref{eq:dual1_1}, \eqref{eq:cons1}, and \eqref{eq:cons2}, we conclude that $u^\top h(x_k)=b(x_k)$ is a supporting hyperplane to $\mathcal{S}_{k}$ and $\frac{v_{k}}{\mu_{k}}\in\partial \mathcal{S}_{k}$.

Next, we show that $\mu_{k}=0$ implies $v_{k}=0$ by contradiction. Suppose $v_{k}\neq 0$. Because $\mathcal{S}_{k}$ is compact by construction, we have that $B_{k}$ has full-column rank.  From \eqref{stt2} and \eqref{stt3}, we then have $B_{k}v_{k}=-\nu_{k}\leq 0$ (since $\mu_{k}=0$). Because $v_{k}\neq 0$ and $B_{k}$ is full column rank, we conclude that $\nu_{k}\neq 0$. This implies that for any $\hat{u}\in\mathcal{S}_{k}$, we can pick $\alpha>0$ arbitrarily large such that $\hat{u}+\alpha v_{k}\in\mathcal{S}_{k}$ because $B_{k}(\hat{u}+\alpha v_{k})=B_{k}\hat{u} -\alpha\nu_{k}\leq d_{k}$, which contradicts the compactness of $\mathcal{S}_{k}$. Thus, we have proved that $\mu_{k}=0$ implies $\nu_{k}=0$.

Now, we can rewrite \eqref{stt1} as follows:
\begin{subequations}\label{eq:cons3}
\begin{align}
    &\nabla f(x_k)+ \mu_{k}\left(\nabla h(x_k) \left(\frac{v_{k}}{\mu_{k}}\right) -\nabla b(x_k)\right)=0,\quad\mbox{if $\mu_k>0$},\\
    & \nabla f(x_k) = 0,\quad\mbox{if $\mu_k = 0$}.
\end{align}
\end{subequations}
From \eqref{eq:cons3} we conclude that $x_k$ satisfies the first-order condition of \eqref{eq:supsub}.
\end{proof}

We observe that by substituting the optimal multipliers of \eqref{eq:nonlin} into  \eqref{eq:cons3}, we get the first-order condition of \eqref{eq:supsub}. Depending on the strict positivity of $\mu_k$, however, we denote the solution pair of \eqref{eq:nonlin} as $(x_k, \lambda_k, u_k)$, where 
\begin{align}\label{eq:pair}
   \lambda_k= 
\begin{cases}
    \mu_{k}>0,& \text{if } \mu_k>0\\
    0,              & \text{otherwise}
\end{cases},\quad  u_k= 
\begin{cases}
    \frac{v_{k}}{\mu_{k}},& \text{if } \mu_k>0\\
    \text{any } u\in\mathcal{U},              & \text{otherwise.}
\end{cases}    
\end{align}


\subsection{Projection and Cuts}\label{sec:proj_cuts}
To improve the solution of \eqref{eq:nonlin} over iteration, we use cutting planes to reduce the size of $\mathcal{S}_k$ and remove the violated uncertainty realization in the current solution pair. At iteration $k$, we have the solution pair $(x_{k},\lambda_{k},u_{k})$ from the reformulation \eqref{eq:nonlin}. If $u_{k}\notin\mathcal{U}$,  we add cutting planes that are valid for the uncertainty set $\mathcal{U}$, to remove the violated point $u_{k}$ from $\mathcal{S}_{k}$ and reduce the gap between the superset $\mathcal{S}_k$ and the uncertainty set $\mathcal{U}$. Next, we will provide different ways to generate cutting planes that exclude such points $u_{k}$. We define the Jacobian of $g(u)$ as $\nabla g(u) = [\nabla g_1(u),...,\nabla g_m(u)]$.

\subsubsection{Kelley's Cutting Plane}
Given a point $u_{k}$ with $g(u_{k})\nleq 0$, we first define  Kelley's cutting plane on $u$ as from \cite{Kelley}
\begin{align}\label{eq:kelleycut}
    \nabla g(u_{k})^\top u \leq \nabla g(u_{k})^\top u_{k}-g(u_{k}).
\end{align}

Because $g$ is convex, $\nabla g(u_{k})^\top (u-u_{k}) + g(u_{k}) \leq g(u)\leq 0$ for all $u\in\mathcal{U}$. We note that $u_{k}$ violates the cut because $g(u_{k})\nleq 0$. 

We also note that the cutting plane \eqref{eq:kelleycut} may not support the uncertainty set $\mathcal{U}$. For example, with uncertainty set $\mathcal{U}:=\{(u_1,u_2)\ \lvert\ u_1^2 + u_2^2\leq 1\}$ with violated $u=(2,0)\notin\mathcal{U}$, the resulting cutting plane is $u_1\leq 1.25$, which does not support $\mathcal{U}$. Next, we provide two alternatives to address this issue.

\subsubsection{Euclidean Projection Cut}
We can generate an alternative cutting plane at Euclidean projection $z_{k}= \text{proj}_{\mathcal{U}}(u_{k})$ that removes $u_{k}$ and supports $\mathcal{U}$. The cutting plane is given by 
\begin{align}\label{eq:eudcut}
    \nabla g(z_{k})^\top u \leq \nabla g(z_{k})^\top z_{k}-g(z_{k}).
\end{align}

Cutting plane \eqref{eq:eudcut} is also valid for $\mathcal{U}$ and $z_{k}\in\partial\mathcal{U}$ because of the Euclidean projection. Hence, \eqref{eq:eudcut} supports $\mathcal{U}$. The exclusion of $u_{k}$ when $u_k\notin\mathcal{U}$ was proved in  Theorem 1 of \cite{eudcut}. 


    

\subsubsection{Gradient-Free Cut}
We replace the Euclidean projection cut with its gradient-free version. Instead of using $\nabla g$, we use the Euclidean projection $z_k$ and define the gradient-free cut as follows: 


\begin{align}\label{eq:eudcut2}
(u_{k}-z_{k})^\top u \leq (u_{k}-z_{k})^\top z_{k}.
\end{align}
Inequality \eqref{eq:eudcut2} is valid for $\mathcal{U}$ because of the variational characteristic of projection \cite{proj_prop}. Because $z_{k}\in\partial\mathcal{U}$, \eqref{eq:eudcut2} supports $\mathcal{U}$. The exclusion of $u_{k}$ can also be seen by plugging it in because $u_{k}\neq z_{k}$.

\begin{Exp}
Here we consider the following NRO problem to demonstrate the difference between the cutting planes:
    \begin{align}
    \min_{x_1,x_2}\quad &-x_1-x_2\label{eq:rob0308}\\
    \text{s.t.}\quad &x_1^2u_1+x_2^2u_2\leq 6,\ \forall (u_1,u_2)^\top\in\mathcal{U},\nonumber
\end{align}
where $\mathcal{U}=\{u_1^2 + u_2^2\leq 1, u_1\geq 0, u_2\geq 0\}$. The solution of \eqref{eq:rob0308} is $x^*=(\sqrt{3\sqrt{2}},\sqrt{3\sqrt{2}})$ with  optimal multiplier $\lambda^*=\sqrt{3\sqrt{2}}/6$ and $u^*=(\sqrt{2}/2,\sqrt{2}/2)$.

     \begin{figure}[htb]
    \centering
    \begin{subfigure}{0.3\textwidth}
        \includegraphics[width=\textwidth]{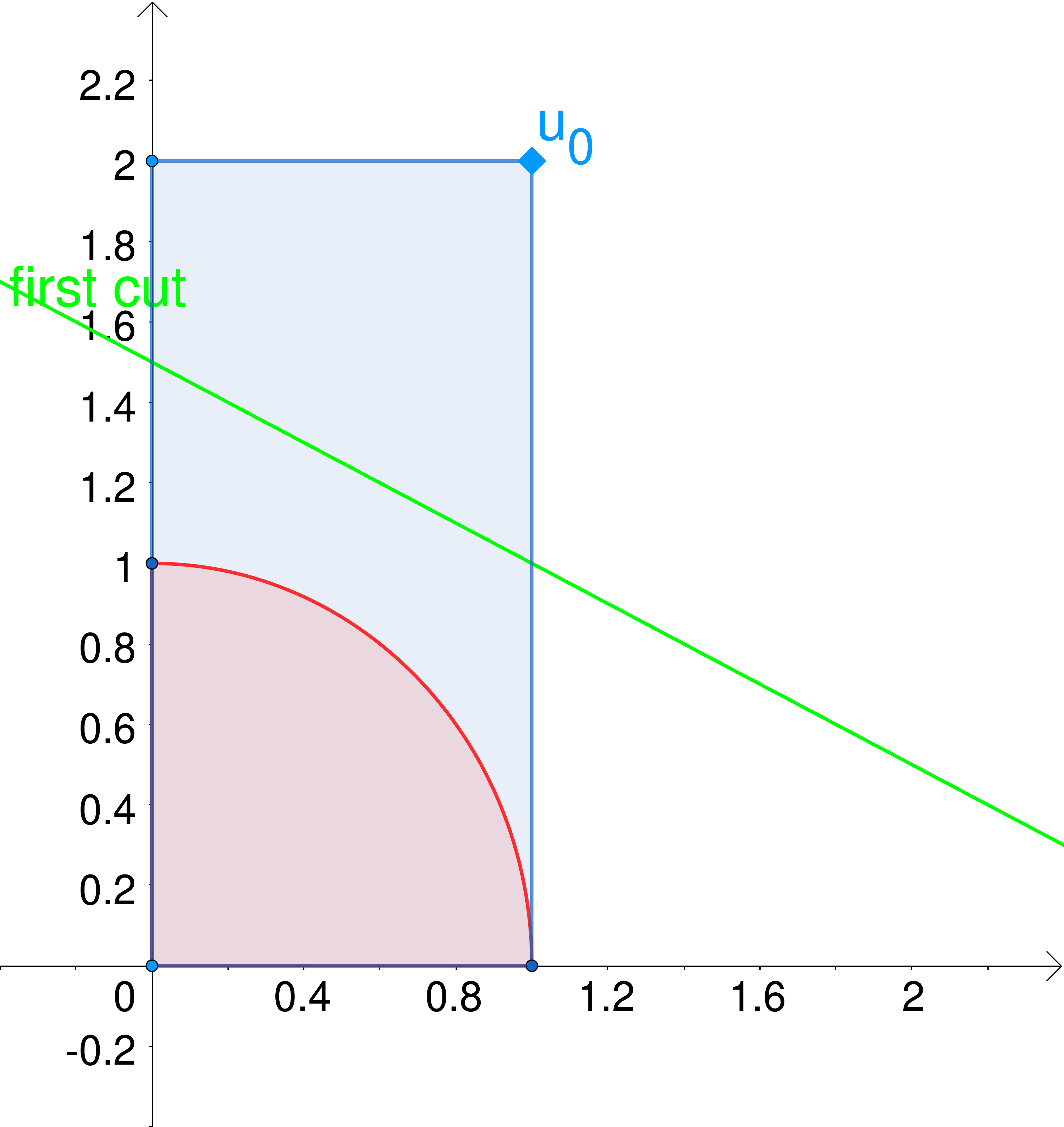}
    \end{subfigure}\quad
    \begin{subfigure}{0.3\textwidth}
        \includegraphics[width=\textwidth]{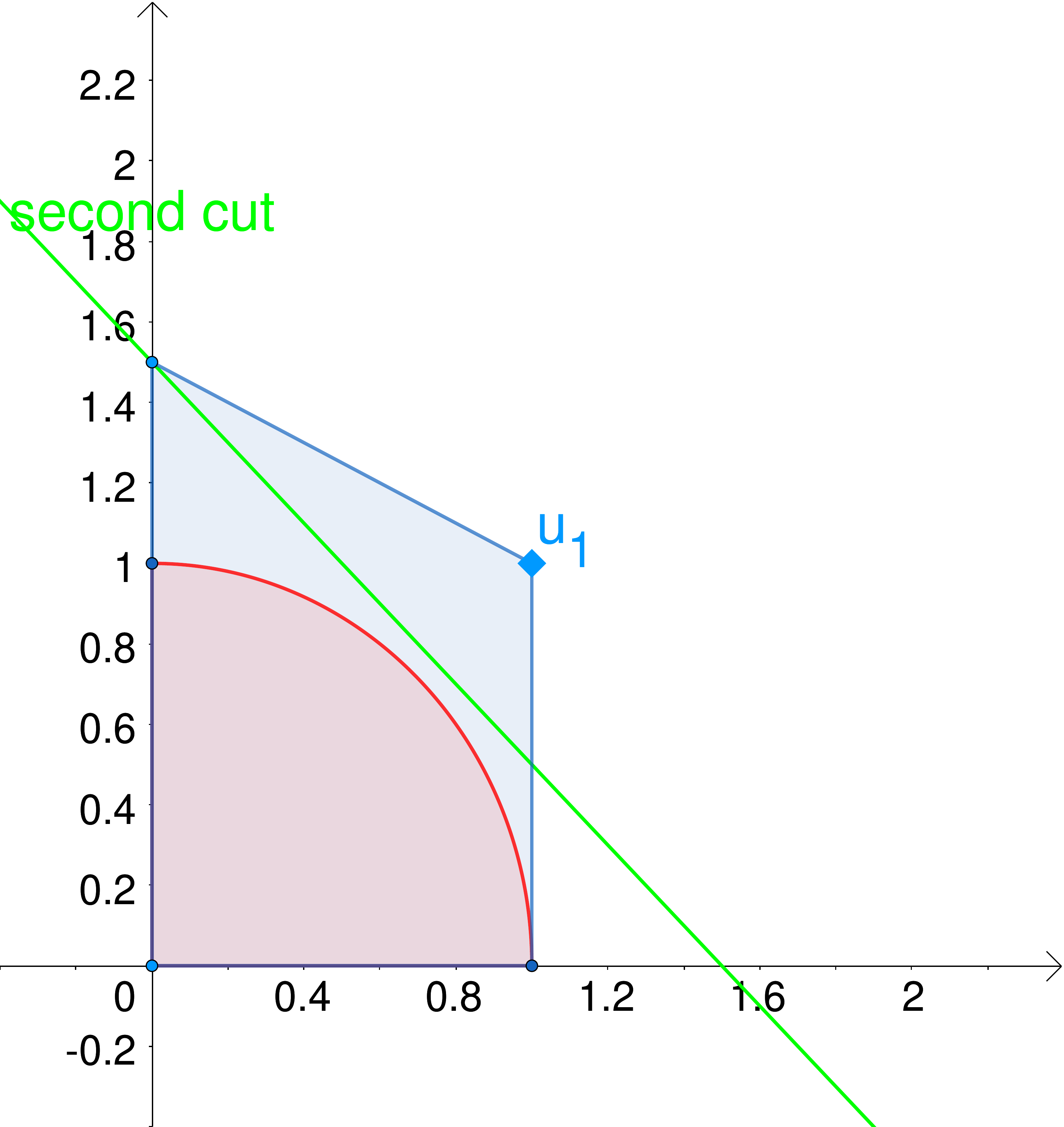}
    \end{subfigure}\quad
    \begin{subfigure}{0.3\textwidth}
        \includegraphics[width=\textwidth]{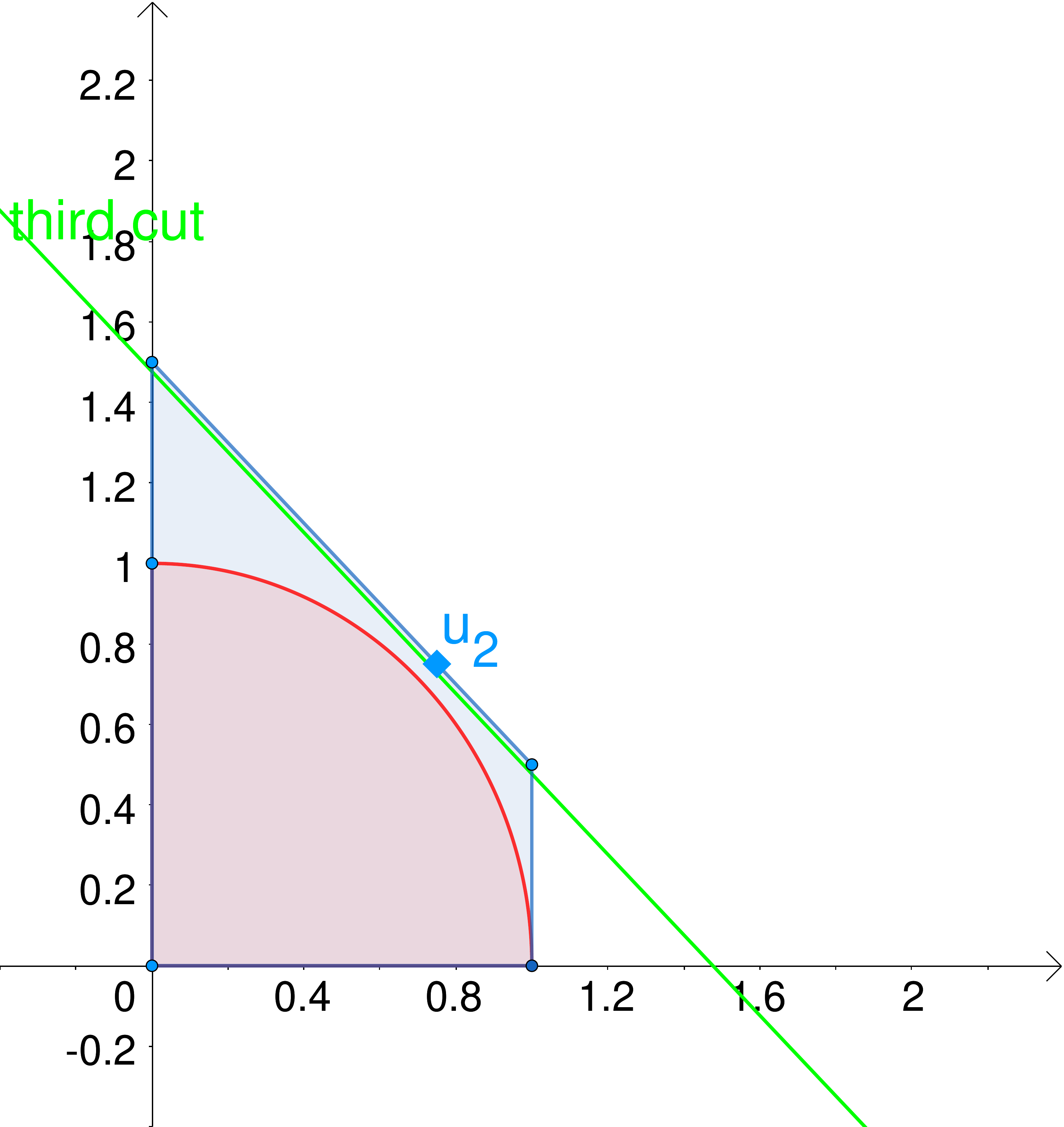}
    \end{subfigure}\vspace{0.1cm}\\
    \begin{subfigure}{0.3\textwidth}
        \includegraphics[width=\textwidth]{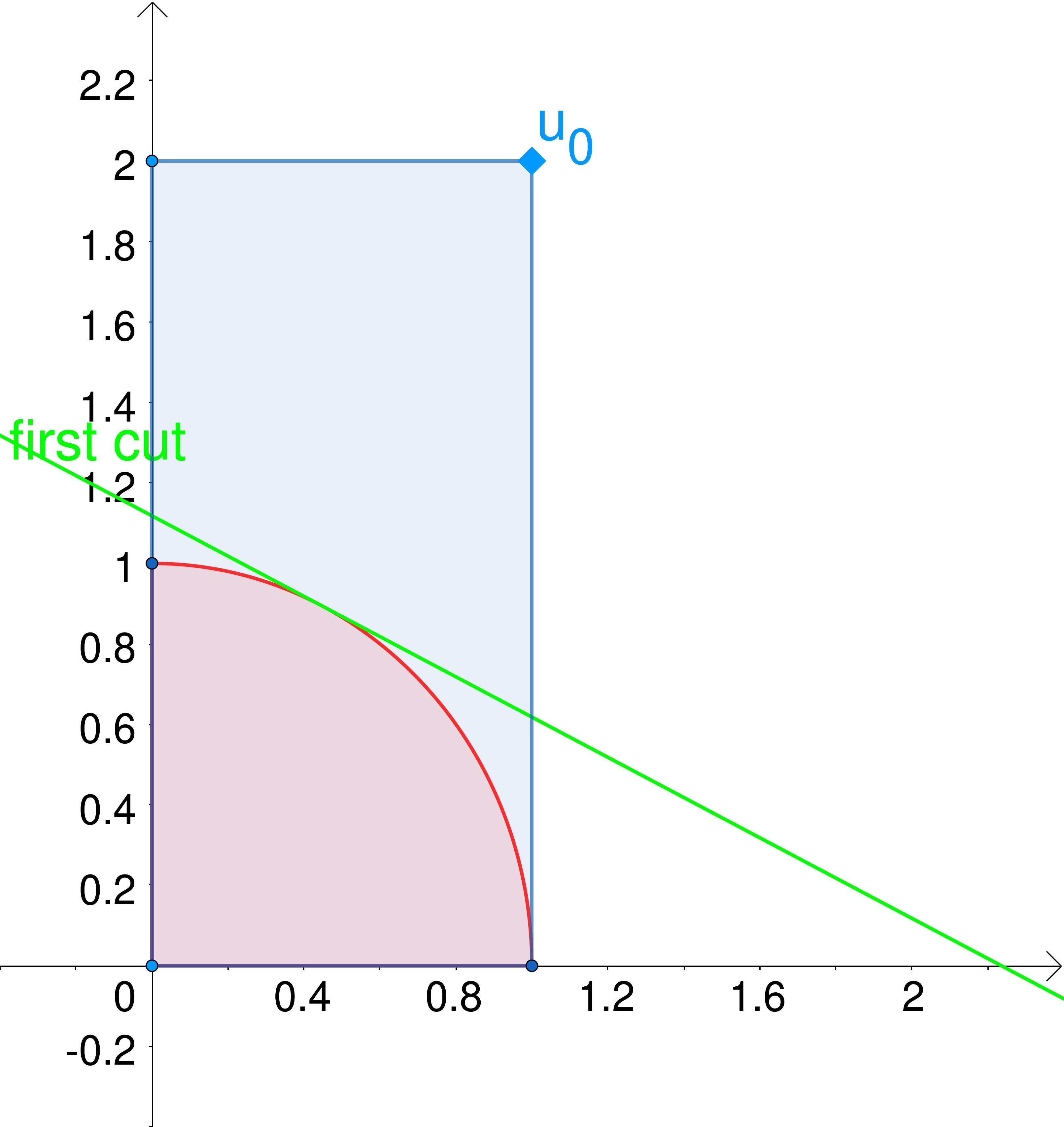}
        \caption{$\mathcal{S}_0$ and first cut.}
    \end{subfigure}\quad
    \begin{subfigure}{0.3\textwidth}
        \includegraphics[width=\textwidth]{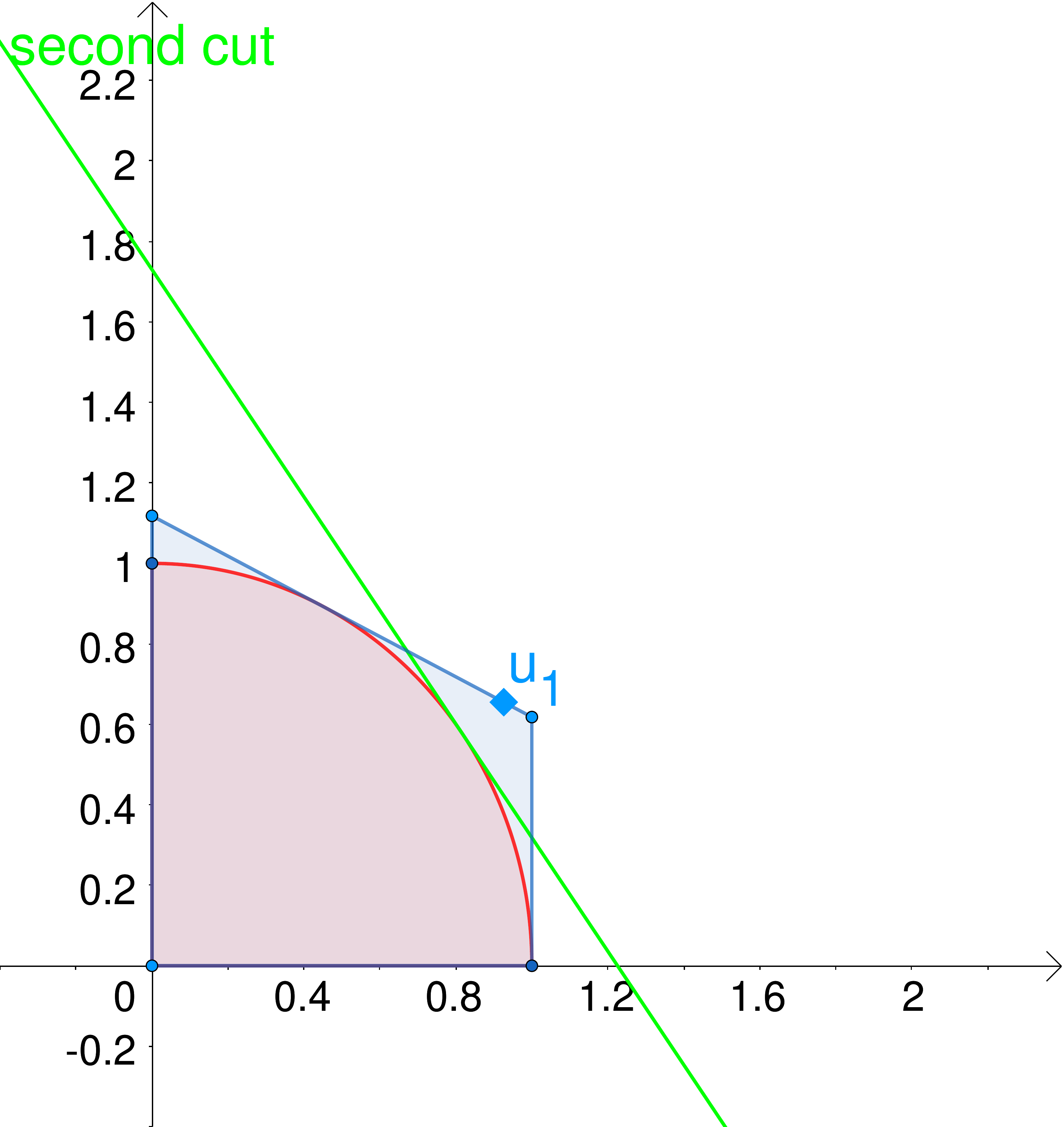}
        \caption{$\mathcal{S}_1$ and second cut.}
    \end{subfigure}\quad
    \begin{subfigure}{0.3\textwidth}
        \includegraphics[width=\textwidth]{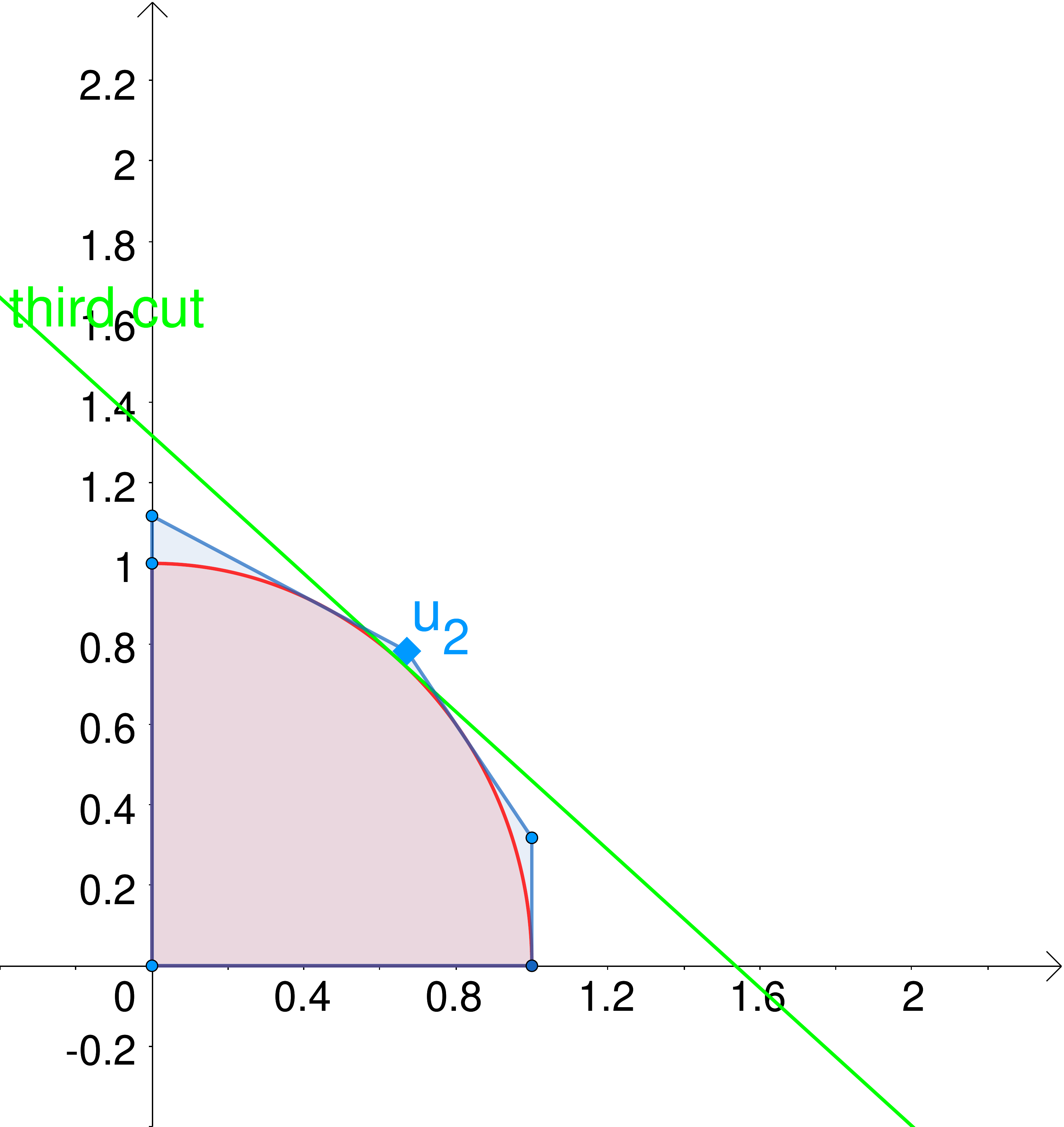}
        \caption{$\mathcal{S}_2$ and third cut.}
    \end{subfigure}
    \caption{Illustration of the superset update in Algorithm~\ref{alg:sw} using Kelley's cutting plane (top) and Euclidean projection cut (bottom).}\label{fig:illu0}
    \end{figure}
    
    In Figure \ref{fig:illu0} we illustrate the superset update in Algorithm~\ref{alg:sw_sketch} with all proposed types of cutting planes for the first three iterations (i.e., $k=0,1,2$). The algorithm starts with initial superset $\mathcal{S}_0=\{0 \leq u_1 \leq 1,\ 0\leq u_2\leq 2\}$ (rectangular). In this example, gradient-free cuts coincide with the Euclidean projection cuts and result in the same algorithm progress because of the simplicity of the uncertainty set. We observe that the Euclidean projection cut and gradient-free cut are more effective than Kelley's cutting plane in terms of improving the superset to better approximate the uncertainty set. The second cut and third cut from Kelley's cutting method are close to each other and can be less effective. The detailed iteration steps are summarized in Table~\ref{tab:kelley_eud}.

\end{Exp}  

\begin{table}[ht]
\centering
     \caption{Solution pair $(x_k, u_k, \lambda_k)$ and cutting planes obtained in the first three iterations of Algorithm~\ref{alg:sw_sketch} (i.e., $k = 0,1,2$) where $a_1 = \frac{4\sqrt{6}+\sqrt{12}-\sqrt{5}-2\sqrt{10}}{7}$ and $a_2 = \frac{(\sqrt{10}-\sqrt{3})(1+2\sqrt{2})}{7}$.}
  \centering
    \begin{subtable}[h]{1\textwidth}
    \centering
       \caption{Kelley's Cutting Plane.}
\begin{tabular}{c|c|c|c|c}
\hline
\begin{tabular}[c]{@{}c@{}}Iteration\\ $k$\end{tabular} & $x_k$ & $u_k$ & $\lambda_k$ & $k$th Cutting Plane \\ \hline\hline
$0$                                                     & $(2,1)$   & $(1,2)$   &   $1/4$      & $u_1 + 2u_2\leq 3$                                                            \\ \hline
$1$                                                     & $(\sqrt{3},\sqrt{3})$    &  $(1,1)$  & $\sqrt{3}/6$       &  $u_1 + u_2\leq 1.5$                                                          \\ \hline
$2$                                                     &  $(2,2)$  &  $(0.75,0.75)$  & $1/3$        &  $u_1+u_2\leq 443/300$                                                          \\ \hline
\end{tabular}
       \label{tab:kelley}
    \end{subtable}
    \newline
    \vspace{0.2cm}
    \newline
    \begin{subtable}[h]{1\textwidth}
     \centering
    \caption{Euclidean Projection Cut.}
         \centering
\begin{tabular}{c|c|c}
\hline
\begin{tabular}[c]{@{}c@{}}Iteration\\ $k$\end{tabular} & $x_k$ & $u_k$ \\ \hline\hline
0                                                     & $(2,1)$   &  $(1,2)$                                 \\[3pt] \hline
1                                                     & $\left(\frac{\sqrt{30\sqrt{5}}}{5},\frac{2\sqrt{15\sqrt{5}}}{5}\right)$   & $\left(\sqrt{5}(\sqrt{2}-1),\frac{\sqrt{10}(\sqrt{2}-1)}{2}\right)$                                                         \\[5pt] \hline
2                                                     &  $\left(\sqrt{\frac{6a_2}{a_1a_2 + a_1^2}},\sqrt{\frac{6a_1}{a_1a_2 + a_2^2}}\right)$ &  $(a_1,a_2)$                                                        \\[6pt] \hline
\end{tabular}
\newline
\vspace{0.2cm}
\newline
\begin{tabular}{c|c|c}
\hline
\begin{tabular}[c]{@{}c@{}}Iteration\\ $k$\end{tabular} & $\lambda_k$ & $k$th Cutting Plane \\ \hline\hline
0                                                      & $1/4$        &   $\sqrt{5}u_1 + 2\sqrt{5}u_2\leq 5$                                                         \\[3pt] \hline
1                                                       &   $\frac{\sqrt{6}(\sqrt{2}+1)}{12}$      &   $\sqrt{6}u_1+\sqrt{3}u_2\leq3$                                                         \\[5pt] \hline
2                                                     &   $\frac{\sqrt{a_1+a_2}}{2\sqrt{6a_1a_2}}$      &   $\sqrt{\frac{a_1^2}{a_1^2 + a_2^2}}u_1+\sqrt{\frac{a_2^2}{a_1^2 + a_2^2}}u_2\leq 1$                                                         \\[6pt] \hline
\end{tabular}
        \label{tab:eud}
     \end{subtable}
     \label{tab:kelley_eud}
\end{table}

\subsection{Termination Criteria}\label{sec:termination}


We consider the NRO subproblem \eqref{eq:supsub} using $\mathcal{S}$ and denote its solution pair by $(\bar{x},\bar{\lambda},\bar{u})$ (from its reformulation \eqref{eq:nonlin}). To evaluate the solution quality to \eqref{eq:original}, we use \eqref{eq:robkkt_err} as follows: 
\begin{align}\label{eq:robkkt_err_sup}
    errm(\bar{x},\bar{\lambda},\{\bar{u}\})=\|\bar{u}-\text{proj}_{\mathcal{U}}(\bar{u})\|_2.
\end{align}
Because $\bar{u}\in\mathcal{S}$ and $\mathcal{S}\supseteq\mathcal{U}$, \eqref{eq:robkkt_err_sup} can be nonzero. All the remaining components in \eqref{eq:robkkt_err} are zero because of the proof of Theorem~\ref{thm:main}. For brevity, we will use a short notation $errm(x)$ in the future discussion of the superset algorithm.

\paragraph{Comparison with Polak's Subproblem}
As comparison, we consider the sample-based finite nonlinear problem \eqref{eq:Polak_fin} using $U_{k}$ and denote its solution as $\hat{x}$. Define a finite set $\hat{A} := \{u\in U_{k} \ \lvert\ u^\top h(\hat{x}) = b(\hat{x})\}$, and let $\hat{\lambda}_u$ be the corresponding optimal multipliers of \eqref{eq:Polak_fin} for all $u\in\hat{A}$. We use \eqref{eq:robkkt_err} to evaluate the solution quality of $\hat{x}$ to \eqref{eq:original} as follows:

\begin{align}\label{eq:robkkt_err_pol}
    errm(\hat{x},\hat{\lambda}_u,\hat{A})=\left[\max_{u\in\mathcal{U}}u^\top h(\hat{x}) - b(\hat{x})\right]^+.
\end{align}
Because $\hat{x}$ can  guarantee feasibility only for the given sample set $U_k$ but not for the original uncertainty set $\mathcal{U}$, \eqref{eq:robkkt_err_pol} can be nonzero. All the remaining components \eqref{eq:7a}, \eqref{eq:7c}, and \eqref{eq:7d} are zero because of the KKT condition of \eqref{eq:Polak_fin}.

From the comparison above, we observe that the solutions of the polytopic superset NRO and sample-based subproblem of Polak's algorithm achieve different KKT guarantees. The iterates from  Polak's algorithm violate primal feasibility. On the other hand, the iterates from the superset algorithm violate the activity check but are guaranteed to satisfy primal feasibility. 

\subsection{Feasibility Restoration}\label{rmk:feas}
If \eqref{eq:supsub} is infeasible for $k=0$, there are two possibilities:  either the original problem \eqref{eq:original} is infeasible or  the initial $\mathcal{S}_0$ is too conservative. To detect whether \eqref{eq:original} is infeasible or restore the feasibility of \eqref{eq:supsub}, we define the following phase-I problem to \eqref{eq:original}:
    \begin{align}\label{eq:phase1}
        \min_{x,p\geq 0}\  p \quad \mbox{s.t. } u^\top h(x)\leq b(x)+p,\ \forall u\in\mathcal{U}.
\end{align}

One can easily  see that \eqref{eq:phase1} is always feasible. 

\begin{prop}
    \eqref{eq:phase1} has optimal objective value $p^*=0$ if and only if \eqref{eq:original} is feasible.
\end{prop}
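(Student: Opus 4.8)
The plan is to prove the two directions of the equivalence separately, using that the uncertainty set $\mathcal{U}$ and the feasible set $\mathcal{F}$ behave well under Assumption~\ref{asmp:form}, and that the added slack variable $p$ decouples cleanly from $x$ in the robust constraint of \eqref{eq:phase1}.

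First I would handle the easy direction: if \eqref{eq:original} is feasible, pick any $\hat{x}\in\mathcal{F}$, so that $u^\top h(\hat{x})\leq b(\hat{x})$ for all $u\in\mathcal{U}$. Then $(\hat{x},0)$ is feasible in \eqref{eq:phase1}, so $p^*\leq 0$; since $p\geq 0$ is imposed, $p^*=0$. For the converse, suppose $p^*=0$. The subtle point is that $p^*=0$ need not be \emph{attained}, so I would argue via a minimizing sequence $(x_\ell,p_\ell)$ feasible in \eqref{eq:phase1} with $p_\ell\downarrow 0$. For each $\ell$, $x_\ell$ lies in the set $\mathcal{F}_{p_\ell}:=\{x:\ u^\top h(x)\leq b(x)+p_\ell,\ \forall u\in\mathcal{U}\}$, a relaxation of $\mathcal{F}=\mathcal{F}_0$. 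I would first check that these relaxed feasible sets are bounded — either by invoking/strengthening the boundedness hypothesis on $\mathcal{F}$ in Assumption~\ref{asmp:form}(3) to a neighborhood $\mathcal{F}_{\bar p}$ for some $\bar p>0$, or by noting the problem data that makes $\mathcal{F}$ bounded typically makes a slightly enlarged version bounded too — so that $(x_\ell)$ has a convergent subsequence $x_{\ell_j}\to x^*$. Continuity of $h$ and $b$ (Assumption~\ref{asmp:form}(2)) together with compactness of $\mathcal{U}$ (Assumption~\ref{asmp:form}(3)) then gives, for each fixed $u\in\mathcal{U}$, $u^\top h(x^*)=\lim_j u^\top h(x_{\ell_j})\leq \lim_j (b(x_{\ell_j})+p_{\ell_j})=b(x^*)$, so $x^*\in\mathcal{F}$ and \eqref{eq:original} is feasible.

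Alternatively — and perhaps more cleanly — I would avoid the sequence argument by working with the worst-case violation function $\phi(x):=\max_{u\in\mathcal{U}}\{u^\top h(x)-b(x)\}$, which is well-defined and continuous in $x$ on compact sets by the maximum theorem, using compactness of $\mathcal{U}$ and continuity of $h,b$. Then \eqref{eq:original} is feasible iff $\inf_x\phi(x)\leq 0$ with the infimum taken over the relevant bounded region, and one shows directly that $p^*=\inf_x[\phi(x)]^+$, whence $p^*=0\Leftrightarrow \inf_x\phi(x)\leq 0\Leftrightarrow\mathcal{F}\neq\emptyset$; attainment of the infimum of $\phi$ over a compact set closes the last step.

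The main obstacle is the non-attainment issue: the proposition as stated claims an optimal \emph{value} of $0$, but one must be careful that $p^*=0$ genuinely forces $\mathcal{F}\neq\emptyset$ rather than merely $\mathcal{F}$ being "asymptotically nonempty." This is exactly where compactness is essential, and the delicate point is that Assumption~\ref{asmp:form}(3) gives boundedness of $\mathcal{F}=\mathcal{F}_0$ but the minimizing sequence for \eqref{eq:phase1} lives in the slightly larger sets $\mathcal{F}_{p_\ell}$; I would need a uniform boundedness statement for $\mathcal{F}_{p}$ with small $p>0$ (or equivalently, coercivity/boundedness of $\phi$'s sublevel sets), and I would state that explicitly as the one nontrivial ingredient, everything else being continuity and elementary inequalities.
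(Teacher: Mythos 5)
Your forward direction is exactly the paper's intended ``direct checking'': any $\hat x\in\mathcal{F}$ gives the feasible point $(\hat x,0)$ in \eqref{eq:phase1}, and the constraint $p\geq 0$ then forces $p^*=0$. Where you diverge is the converse, and your instinct there is sound. The paper's (unwritten) argument is the one-line observation that an optimal solution $(x^*,0)$ of \eqref{eq:phase1} satisfies $u^\top h(x^*)\leq b(x^*)$ for all $u\in\mathcal{U}$, hence $x^*\in\mathcal{F}$ --- which silently assumes the optimal value is attained. You are right that attainment is not free: if $p^*$ is read as an infimum, the statement can actually fail under Assumption~\ref{asmp:form} as written. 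Take $n=1$, $\mathcal{U}=\{0\}$ (so $g(u)=u^2$), $h\equiv 1$, $b(x)=-e^{-x}$; then $\mathcal{F}=\emptyset$ (hence bounded), yet \eqref{eq:phase1} reduces to minimizing $p$ subject to $p\geq e^{-x}$ and $p\geq 0$, whose infimum is $0$ and is not attained. So the extra ingredient you flag --- uniform boundedness of the relaxed sets $\mathcal{F}_{p}$ for small $p>0$, equivalently bounded sublevel sets of the worst-case violation $\phi(x)=\max_{u\in\mathcal{U}}\{u^\top h(x)-b(x)\}$ --- is genuinely necessary for your minimizing-sequence argument, and without some such hypothesis the converse is false rather than merely harder. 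The clean resolution, and evidently what the paper intends (Algorithm~\ref{alg:feas} tests whether a \emph{computed minimizer} has $p_k=0$), is to read ``optimal objective value $p^*=0$'' as ``a minimizer with $p=0$ exists,'' in which case the converse is immediate and your first two sentences already constitute the whole proof. Your alternative route via $\phi$ is correct as far as it goes but inherits the same attainment question for $\inf_x\phi(x)$ over an unbounded domain, so it does not avoid the issue.
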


The proof can be obtained by direct checking. To solve \eqref{eq:phase1}, we will use the superset algorithm starting with the initial box superset $\hat{\mathcal{S}}$. We define the NRO subproblem to \eqref{eq:phase1} with polytopic superset $\mathcal{S}_k$ at iteration $k$ as 
\begin{align}\label{eq:phase1sup}
        \min_{x,p\geq 0}\  p\quad \mbox{s.t. } u^\top h(x)\leq b(x)+p,\ \forall u\in\mathcal{S}_{k}.
\end{align}
Similar to \eqref{eq:phase1} being a phase-I problem to \eqref{eq:original}, \eqref{eq:phase1sup} is a phase-I problem to \eqref{eq:supsub} and can be solved with its reformulation from Proposition~\ref{prop:robref}. Based on \eqref{eq:phase1sup}, we design the following feasibility restoration algorithm.

\begin{algorithm}[H]
\begin{algorithmic}[1]

\State {\bf initialization:} Set $k\gets0$, and initialize superset $\mathcal{S}_0 \gets \hat{\mathcal{S}}$ and some $\epsilon>0$.\;

\Repeat

\State {\bf superset subproblem:} Let $({x}_{k}, p_k)$ solve \eqref{eq:phase1sup}.

    \If {$ p_{k} = 0$}
        \State {\bf return} ${x}_{k}$, $p_{k}$, $\mathcal{S}_{k}$. \Comment{$x_k$ is feasible in \eqref{eq:original}.} 
    \EndIf

\State {\bf update superset:} Update $\mathcal{S}_{k}$ with the following cutting planes:
\State \hskip3em $\bullet$ Kelley's cutting plane: $\nabla g(u_{k})^\top u \leq \nabla g(u_{k})^\top u_{k}-g(u_{k})$, or
\State \hskip3em $\bullet$ Euclidean Projection cut: $\nabla g(z_{k})^\top u \leq \nabla g(z_{k})^\top z_{k}-g(z_{k})$, or
\State \hskip3em $\bullet$ Gradient-free cut: $(u_{k}-z_{k})^\top u \leq (u_{k}-z_{k})^\top z_{k}.$

\State Set $k\gets k+1$.

\Until{$errm({x}_{k-1})\leq \epsilon$.}
    
\State {\bf return} ${x}_{k-1}$, $p_{k-1}$, $\mathcal{S}_{k-1}$. \Comment{\eqref{eq:original} is infeasible with $p_{k-1}>0$.} 
    
\end{algorithmic} 
\caption{Feasibility Restoration: $(x, p, \mathcal{S})\leftarrow\text{FR}(\hat{\mathcal{S}}, \epsilon). $}\label{alg:feas}
\end{algorithm}

    
Here the error measure $errm$ in line 12 follows the same definition to \eqref{eq:robkkt_err_sup}. Algorithm~\ref{alg:feas} has two termination criteria. When $p_{k} = 0$ in line 4, we obtain ${x}_k$, which  is feasible in \eqref{eq:original}. In this case, superset $\mathcal{S}_k$ is returned and will be used in the main loop of Algorithm~\ref{alg:sw_sketch}. Otherwise, in line 13 we terminate with an $\epsilon$-stationary point $x_{k-1}$ to the phase-I problem \eqref{eq:phase1} with a certificate of infeasibility $p_{k-1}>0$. In Section~\ref{sec:conv} we show that the Algorithm~\ref{alg:feas} results in the convergence to stationarity. 



\subsection{Full Algorithm}\label{sec:full_alg}

The complete steps of the superset algorithm are described in Algorithm~\ref{alg:sw}. First, we recall the Euclidean projection $z_k = \text{proj}_\mathcal{U}(u_k)$. 

\begin{algorithm}[H]
\begin{algorithmic}

\State {\bf initialization:} Set $k\gets 0$; initialize a box superset $\hat{\mathcal{S}}\supseteq\mathcal{U}$ and some $\epsilon>0$.

\State {\bf feasibility restoration:} $(x_0,p_0,\mathcal{S}_0) \leftarrow\text{FR}(\hat{\mathcal{S}}, \epsilon)$. 

\If{$p_0>0$}
\State {\bf return} $x_0$ and the infeasibility of \eqref{eq:original}, $p_0$. 
\EndIf

\Repeat
    
\State {\bf superset subproblem:} Let $x_{k}$ be the solution of \eqref{eq:nonlin}.

\State {\bf update supersets:} Update $\mathcal{S}_{k}$ with cutting planes: 

\State \hskip3em $\bullet$ Kelley's cutting plane: $\nabla g(u_{k})^\top u \leq \nabla g(u_{k})^\top u_{k}-g(u_{k})$, or
\State \hskip3em $\bullet$ Euclidean Projection cut: $\nabla g(z_{k})^\top u \leq \nabla g(z_{k})^\top z_{k}-g(z_{k}),$ or
\State \hskip3em $\bullet$ Gradient-free cut: $(u_{k}-z_{k})^\top u \leq (u_{k}-z_{k})^\top z_{k}.$

\State Set $k\gets k+1$.
    
\Until{$errm(x_{k-1})\leq\epsilon.$}    
    
\State {\bf return} $x_{k-1}$.
 
\end{algorithmic}
\caption{Superset Algorithm}\label{alg:sw}
\end{algorithm}
    
If \eqref{eq:original} is infeasible or if the initial box superset $\hat{\mathcal{S}}$ is overly conservative, then \eqref{eq:supsub} can be infeasible. Here we use the feasibility restoration step (see Section~\ref{rmk:feas}) to either restore the feasibility by constructing a new superset $\mathcal{S}_0$ for the NRO subproblem \eqref{eq:supsub} or return a stationary point $x_0$ to a phase-I problem \eqref{eq:phase1} with an infeasibility certificate $p_0>0$.  

After exiting feasibility restoration with $p_0 = 0$, we iteratively solve the NRO subproblem \eqref{eq:supsub} and update the superset $\mathcal{S}_k$ with cutting planes. We define the feasible set of \eqref{eq:supsub} at iteration $k$ as $\mathcal{F}_k$. As we keep adding cutting planes, we conclude that $\mathcal{F}_k\subseteq \mathcal{F}_{k+1}\subseteq \mathcal{F}$ for all $k$ (i.e., the feasible set of \eqref{eq:supsub} gets larger) and the sequence of the resulting supersets $\{\mathcal{S}_k\}$ is all compact sets. We  also achieve a monotonic convergent upper bound on the optimal objective value of \eqref{eq:original} (i.e., $ f(x_k)\geq f(x_{k+1})\geq f^*$, where $f^*$ is the optimal objective value of \eqref{eq:original}). In Section~\ref{sec:proj_cuts} we will show that Algorithm~\ref{alg:sw} generates solution iterates that converge to the solution of \eqref{eq:original} with the proposed cutting plane types.

In addition to the upper bound, a lower bound on the optimal objective value can  be computed by solving a problem like \eqref{eq:Polak_fin}, whose sample sets are collected from the Euclidean projection $z_k$ in the error measure $errm(x_k)$ \eqref{eq:robkkt_err_sup}. Alternatively, we can combine the superset algorithm with any effective outer approximation methods to obtain faster lower bound convergence (e.g., \cite{dis1,tut4, outer1}). This combined algorithm will have a decreasing objective gap to terminate with.

Compared with Polak's algorithm, the superset algorithm is capable of providing both lower and upper bounds to the optimal objective value of \eqref{eq:original}, whereas  Polak's algorithm provides only a lower bound. In terms of feasibility, the superset algorithm generates iterates that are feasible in \eqref{eq:original} whereas the iterates of Polak's algorithm are generally infeasible in \eqref{eq:original} before convergence. This allows us to define a superset algorithm with early terminations in practical applications if needed. In terms of assumption requirements, the superset algorithm additionally requires convexity of the uncertainty set $\mathcal{U}$, so that the cutting plane method can be used to update supersets over iteration.

\section{Convergence Analysis}\label{sec:conv}
We prove the convergence for the sequence of the NRO subproblem solutions generated by Algorithm~\ref{alg:sw}. We develop a unified convergence proof for Algorithm~\ref{alg:sw} with each of the proposed cutting plane types in Section~\ref{sec:proj_cuts}. This implies that a hybrid cutting plane strategy also converges. 


\begin{prop}\label{lem:kelley_conv1}
In Algorithm \ref{alg:sw} with Kelley's cutting plane, \eqref{eq:kelleycut}, Euclidean projection cut \eqref{eq:eudcut}, or gradient-free cut \eqref{eq:eudcut2}, there exists a convergent subsequence indexed by $\{k_p\}$ and $u^*\in\mathcal{U}$ such that $\{u_{k_p}\}\rightarrow u^*$.
\end{prop}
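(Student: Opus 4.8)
The statement asserts the existence of a convergent subsequence of the iterates $\{u_k\}$ whose limit lies in $\mathcal{U}$. The natural approach is a compactness argument combined with a contradiction that uses the geometry of the cuts. First I would recall that every $u_k$ lies in $\hat{\mathcal{S}}$ (or equivalently in $\mathcal{S}_0$), which is compact by construction; since $\mathcal{S}_{k}\subseteq\mathcal{S}_0$ for all $k$ (we only add cuts), the whole sequence $\{u_k\}$ lives in the fixed compact set $\mathcal{S}_0$. By Bolzano--Weierstrass there is a subsequence $\{u_{k_p}\}$ converging to some $u^*\in\mathcal{S}_0$. The content of the proposition is therefore entirely in showing $u^*\in\mathcal{U}$, i.e. that the limit point cannot be strictly outside the uncertainty set.

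For that, I would argue by contradiction: suppose $u^*\notin\mathcal{U}$, so $g_{j_0}(u^*)>0$ for some index $j_0$, hence (by continuity of $g$) there is $\delta>0$ with $g_{j_0}(u)\geq\delta$ for all $u$ in a neighborhood of $u^*$, and in particular for all large $p$ along the subsequence. The key is that at iteration $k_p$, if $u_{k_p}\notin\mathcal{U}$ a cut is added that is violated by $u_{k_p}$, and this same cut is a valid inequality for $\mathcal{U}$ that is also enforced at all later iterations $k_q>k_p$, so $u_{k_q}$ satisfies it. I would quantify how deeply $u_{k_p}$ violates its own cut: for Kelley's cut \eqref{eq:kelleycut} evaluated at $u_{k_p}$ the slack is exactly $g_{j}(u_{k_p})$ (up to the gradient normalization), which is bounded below by a positive constant for large $p$; for the projection and gradient-free cuts one similarly gets a lower bound in terms of $\|u_{k_p}-z_{k_p}\|$, and since $u^*\notin\mathcal{U}$ is at positive distance from the compact convex $\mathcal{U}$, this distance is bounded below. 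Using the Lipschitz bound $L$ on $\nabla g$ and the radius $R$ of $\mathcal{U}$ (both introduced in Section~\ref{sec:background} precisely for this purpose), the violation of the cut at $u_{k_p}$ translates into a quantitative separation: there is $\rho>0$ such that $\mathrm{dist}(u_{k_p},\,\{u:\text{$k_p$-th cut is satisfied}\})\geq\rho$ for all large $p$. Since every later iterate $u_{k_q}$ satisfies the $k_p$-th cut, we get $\|u_{k_q}-u_{k_p}\|\geq\rho$ for all $q>p$, contradicting the fact that $\{u_{k_p}\}$ is Cauchy.

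The plan then is: (i) extract the convergent subsequence from compactness of $\mathcal{S}_0$; (ii) assume the limit is outside $\mathcal{U}$ and fix a uniform positive violation margin using continuity of $g$ (resp. positivity of the distance to $\mathcal{U}$); (iii) for each of the three cut types, derive a uniform lower bound $\rho>0$ on the distance from $u_{k_p}$ to the halfspace its cut defines, using convexity of $g$, the Lipschitz constant $L$, and the radius $R$; (iv) invoke cut persistence (every cut, once added, is respected forever) to conclude $\|u_{k_q}-u_{k_p}\|\geq\rho$ for $q>p$; (v) contradict the Cauchy property. I expect the main obstacle to be step (iii): getting a \emph{uniform} (independent of $p$) lower bound $\rho$ on the cut violation, because the gradient $\nabla g(u_{k_p})$ (or $\nabla g(z_{k_p})$) can in principle be large, which shrinks the Euclidean distance to the cut hyperplane even when the functional violation is bounded below. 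This is exactly where the Lipschitz bound $L$ on $\nabla g$ and the bounded radius $R$ of $\mathcal{U}$ must be used carefully: $\|\nabla g_j(z_{k_p})\|\le \|\nabla g_j(z^0)\| + L\cdot 2R$ along the relevant region, which caps the normalization and rescues the uniform bound. Handling the three cut types in one unified estimate — so that the subsequent hybrid-strategy claim follows for free — is the delicate bookkeeping part, but it is routine once the right constants are pinned down.
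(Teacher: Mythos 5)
Your proposal is correct and follows essentially the same route as the paper's proof: cut persistence plus a uniform lower bound on the violation (via the Lipschitz constant $L$ for Kelley's cut and the radius $R$ for the projection-based cuts) yields a uniform separation $\|u_{k_q}-u_{k_p}\|\geq\rho$ between iterates, contradicting the Cauchy property guaranteed by compactness of $\mathcal{S}_0$. The only difference is organizational — you extract the convergent subsequence first and then rule out $u^*\notin\mathcal{U}$, while the paper negates the whole claim and shows no Cauchy subsequence can exist — and your worry in step (iii) dissolves exactly as you suspect, since the paper bounds $\nabla g_j(u_t)^\top(u_t-u_k)\leq L\|u_t-u_k\|_2$ directly rather than ever dividing by the gradient norm.
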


\begin{proof}
From Algorithm~\ref{alg:sw}, we obtain a sequence of solutions $\{x_k\}$ and uncertainty realizations $\{u_{k}\}$. We define the corresponding Euclidean projection sequence $\{z_{k}\}$, where $z_k = \text{proj}_{\mathcal{U}}(u_k)$ for all $k$. We claim that there exists a convergent subsequence, indexed by $\{k_p\}$, and $u^*\in\mathcal{U}$ such that $\{u_{{k_p}}\}\rightarrow u^*$. 

Next, we consider each cutting plane in turn and prove the claim by contradiction. If the convergence does not occur, then there exists $r>0$ (independent of $k$) such that for any given $k$ and $t$ and $0\leq t \leq k-1$, we have the following:
\begin{enumerate}
    \item Kelley's cutting plane \eqref{eq:kelleycut}: \begin{align*}
    r&\leq \max_{j\in J} g_{j}(u_{t})\leq  \max_{j\in J} \nabla g_{j}(u_{t})^\top(u_{t}-u_{k})\leq L\|u_{t}-u_{k}\|_2,
\end{align*}
where the second inequality comes from the fact that $u_k$ satisfies  Kelley's cutting plane at iteration $t$ and the following inequality comes from the Lipschitz continuity of $\nabla g_j$. Then, it follows that for every subsequence (i.e., $k_l<k_m$), we have
\begin{align}\label{eq:vio1}
    \|u_{{k_l}}-u_{k_m}\|_2\geq r/L.
\end{align}

\item Euclidean projection cut \eqref{eq:eudcut}: 
\begin{subequations}
\begin{align}
    r\leq \|u_{t}-z_{t}\|_2^2&= (u_{t}-u_{k})^\top(u_{t}-z_{t})+(u_{k}-z_{t})^\top(u_{t}-z_{t})\\
    &\leq (u_{t}-u_{k})^\top(u_{t}-z_{t})\label{eq:eud_sub1_gf}\\
    &\leq \|u_{t}-u_{k}\|_2\|u_{t}-z_{t}\|_2\\
    &\leq 2R\|u_{t}-u_{k}\|_2,\label{eq:eud_sub2_gf}
\end{align}
\end{subequations}
where \eqref{eq:eud_sub2_gf} comes from the fact that $u_{t},z_{t}\in\mathcal{S}_{0}$. By construction, $\mathcal{S}_{0}$ is a compact set and hence bounded with radius $R$.
Next, we show \eqref{eq:eud_sub1_gf} by proving $(u_{t}-z_{t})^\top(u_{k}-z_{t})\leq 0$. For all $j\in \hat{A}_{t}=\{j\in J:  g_{j}(z_{t}) = 0\}$ , we have
\begin{align}\label{eq:conv2}
    g_{j}(z_{t}) = 0\ \Rightarrow\ \nabla g_{j}(z_{t})^\top(u_{k}-z_{t})\leq 0, 
\end{align}
because $u_{k}$ satisfies the Euclidean projection cut \eqref{eq:eudcut} at iteration $t$. Next, based on the KKT condition of \eqref{eq:eud}, we have
\begin{align}\label{eq:optcut2}
        2(z_{t}-u_{t})+\sum_{j\in\hat{A}_t}\lambda_{jt}\nabla g_{j}(z_{t})=0,
    \end{align}
where $\lambda_{jt}\geq 0$ are the optimal multipliers for all $j\in\hat{A}_t$. Then, from \eqref{eq:conv2} and \eqref{eq:optcut2}, we get
\begin{align*}
   2(u_{t}-z_{t})^\top(u_{k}-z_{t})= \sum_{j\in\hat{A}_{t}}\lambda_{jt}\nabla g_{j}(z_{t})^\top(u_{k}-z_{t})\leq 0.
\end{align*}

Next, from \eqref{eq:eud_sub2_gf}, it follows that for every subsequence (i.e., $k_l<k_m$), we have
\begin{align}\label{eq:vio2}
    \|u_{{k_l}}-u_{{k_m}}\|_2\geq r/2R.
\end{align}



\item Gradient-free cut \eqref{eq:eudcut2}: \begin{subequations}
\begin{align}
    r\leq \|u_{t}-z_{t}\|_2^2&= (u_{t}-u_{k})^\top(u_{t}-z_{t})+(u_{k}-z_{t})^\top(u_{t}-z_{t})\\
    &\leq (u_{t}-u_{k})^\top(u_{t}-z_{t})\label{eq:sub1_gf}\\
    &\leq \|u_{t}-u_{k}\|_2\|u_{t}-z_{t}\|_2\\
    &\leq 2R\|u_{t}-u_{k}\|_2,\label{eq:sub2_gf}
\end{align}
\end{subequations}
where \eqref{eq:sub1_gf} comes from the fact that $u_{k}$ satisfies the cutting plane generated at iteration $t$, that is, 
\begin{align*}
    (u_{k}-z_{t})^\top(u_{t}-z_{t})\leq 0,
\end{align*}
and \eqref{eq:sub2_gf} comes from the fact that $u_{t},z_{t}\in\mathcal{S}_{0}$ and $\mathcal{S}_{0}$ is compact and hence bounded by construction. Then, it follows that for every subsequence (i.e., $k_l<k_m$), we have
\begin{align}\label{eq:vio3}
    \|u_{{k_l}}-u_{{k_m}}\|_2\geq r/2R.
\end{align}
\end{enumerate}

Inequalities \eqref{eq:vio1}, \eqref{eq:vio2}, and \eqref{eq:vio3} imply that $\{u_{k}\}$ does not contain a Cauchy subsequence, which contradicts  the fact that $\{u_{k}\} \in \mathcal{S}_{0}$ because the compactness of $\mathcal{S}_{0}$ implies that any sequence in $\mathcal{S}_0$ contains a Cauchy subsequence. Therefore, $\{u_{k}\}$ contains a convergent subsequence with indices $\{k_p\}$ that converges to a point $u^*\in \mathcal{U}$. 
\end{proof}

\begin{thm}\label{thm:conv_kelly}
With the convergent subsequence (with indices $\{k_p\}$) generated by Algorithm~\ref{alg:sw}, we obtain the following three conclusions: 
\begin{enumerate}
    \item The sequence of $\{\|z_{{k_p}}-u_{{k_p}}\|_2\}$ converges to $0$.
    \item The sequence of $\{z_{{k_p}}\}$ converges to $u^*$. 
    \item There exist $x^*\in\mathcal{F}$ and a subsequence of $\{x_{k_p}\}$ with indices $\{k_l\}$ such that $\{x_{k_l}\}$ converges to $x^*$ and $\{errm(x_{k_l})\}$ converges to 0. 
\end{enumerate}
\end{thm}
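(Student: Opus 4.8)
The plan is to build on the convergent subsequence $\{u_{k_p}\}\to u^*\in\mathcal{U}$ from Proposition~\ref{lem:kelley_conv1}, and to extract successive nested subsequences to establish the three conclusions in order. For conclusion (1), I would argue that along the convergent subsequence $\{u_{k_p}\}$, one reuses the key inequality chains in the proof of Proposition~\ref{lem:kelley_conv1}: for each cut type, when $u_{k_m}$ satisfies the cut generated at an earlier index $k_l$, one gets a bound of the form $\|z_{k_l}-u_{k_l}\|_2$ (or its square) $\le C\,\|u_{k_l}-u_{k_m}\|_2$ with $C\in\{L,2R\}$. Since $\{u_{k_p}\}$ is Cauchy, the right-hand side tends to $0$ as $l,m\to\infty$ along the subsequence, forcing $\|z_{k_p}-u_{k_p}\|_2\to0$. (For Kelley's cut one first notes $\|z_{k_l}-u_{k_l}\|_2 \le$ const $\cdot \max_j g_j(u_{k_l})$ using convexity/Lipschitz bounds, then bounds $g_j(u_{k_l})$ by the same telescoping argument.) Conclusion (2) is then immediate from the triangle inequality: $\|z_{k_p}-u^*\|_2 \le \|z_{k_p}-u_{k_p}\|_2 + \|u_{k_p}-u^*\|_2 \to 0$.

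For conclusion (3), I would first use Assumption~\ref{asmp:form} that $\mathcal{F}$ is compact: since every iterate $x_k$ is feasible in \eqref{eq:original} (because $\mathcal{S}_k\supseteq\mathcal{U}$), the sequence $\{x_{k_p}\}$ lies in the compact set $\mathcal{F}$ and hence has a further convergent subsequence $\{x_{k_l}\}\to x^*\in\mathcal{F}$. Then I would show $errm(x_{k_l})\to 0$ using the reduced form \eqref{eq:robkkt_err_sup}, namely $errm(x_{k_l}) = \|u_{k_l}-\text{proj}_{\mathcal{U}}(u_{k_l})\|_2 = \|u_{k_l}-z_{k_l}\|_2$, which already tends to $0$ by conclusion (1) restricted to the sub-subsequence $\{k_l\}$. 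The only subtlety here is confirming that the short form \eqref{eq:robkkt_err_sup} of the error measure genuinely holds for each iterate $x_{k_l}$ — i.e.\ that the other components \eqref{eq:7a}, \eqref{eq:7b}, \eqref{eq:7c} of \eqref{eq:robkkt_err} vanish. This follows because $(x_{k_l},\gamma_{k_l})$ is a KKT point of the reformulation \eqref{eq:nonlin}, so by Theorem~\ref{thm:main} and the construction \eqref{eq:pair} of the pair $(x_{k_l},\lambda_{k_l},u_{k_l})$, the stationarity term \eqref{eq:7a} is zero, the multiplier sign term \eqref{eq:7c} is zero, and primal feasibility \eqref{eq:7b} is zero since $x_{k_l}\in\mathcal{F}$; moreover $u_{k_l}^\top h(x_{k_l}) - b(x_{k_l}) = 0$ holds either trivially (when $\mu_{k_l}=0$, $\lambda_{k_l}=0$) or by \eqref{eq:cons1}–\eqref{eq:cons2} (when $\mu_{k_l}>0$), so the absolute-value term in \eqref{eq:7d} also vanishes, leaving only the projection term.

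I expect the main obstacle to be conclusion (1): carefully justifying that the telescoping/Cauchy argument of Proposition~\ref{lem:kelley_conv1} can be \emph{re-run along the convergent subsequence} to produce not just a lower bound separating distinct iterates, but an upper bound on $\|z_{k_p}-u_{k_p}\|_2$ that vanishes. One must be precise about quantifiers — fixing an index $k_l$ in the subsequence and letting a later index $k_m$ vary, using that $u_{k_m}$ satisfies the cut added at step $k_l$ (valid since $\mathcal{S}_{k_m}$ is contained in the half-space of that cut for all $m>l$), and then taking $m\to\infty$ first (so $\|u_{k_l}-u_{k_m}\|_2\to \|u_{k_l}-u^*\|_2$) and $l\to\infty$ afterwards. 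A clean way to organize this is: suppose for contradiction $\|z_{k_p}-u_{k_p}\|_2 \not\to 0$, pass to a further subsequence on which $\|z_{k_p}-u_{k_p}\|_2 \ge r > 0$, and then the same inequalities \eqref{eq:vio1}, \eqref{eq:vio2}, \eqref{eq:vio3} force that subsequence to not be Cauchy, contradicting convergence of $\{u_{k_p}\}$. Conclusions (2) and (3) are then routine triangle-inequality and compactness arguments, with the only bookkeeping being the nested extraction $\{k\}\supseteq\{k_p\}\supseteq\{k_l\}$ and the verification that \eqref{eq:robkkt_err_sup} applies along the way.
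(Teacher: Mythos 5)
Your handling of conclusions (2) and (3) matches the paper's — triangle inequality for (2), compactness of $\mathcal{F}$ plus the reduced error measure \eqref{eq:robkkt_err_sup} for (3) — and your explicit verification that the components \eqref{eq:7a}--\eqref{eq:7c} and the activity term of \eqref{eq:7d} vanish at each iterate is a careful touch the paper leaves implicit. The genuine divergence is conclusion (1). The paper proves it in one line, with no reference to the cutting planes at all: since $z_{k_p}=\text{proj}_{\mathcal{U}}(u_{k_p})$ is the nearest point of $\mathcal{U}$ to $u_{k_p}$, and $u^*\in\mathcal{U}$ by Proposition~\ref{lem:kelley_conv1}, optimality of the projection gives $0\le\|z_{k_p}-u_{k_p}\|_2\le\|u_{k_p}-u^*\|_2\rightarrow 0$, uniformly for every cut type. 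Your plan instead re-runs the telescoping/Cauchy machinery of Proposition~\ref{lem:kelley_conv1} along the subsequence. That route does go through for the Euclidean projection and gradient-free cuts, where the chains give $\|u_{k_l}-z_{k_l}\|_2^2\le 2R\,\|u_{k_l}-u_{k_m}\|_2$ unconditionally and the right-hand side vanishes by Cauchyness; but it is unnecessary and, for Kelley's cut, slightly fragile. The inequality you interpose there, $\|z_{k_l}-u_{k_l}\|_2\le C\max_{j}g_j(u_{k_l})$, is a Hoffman/Robinson-type error bound that does not follow from Assumption~\ref{asmp:form} alone (it requires a Slater-type condition), and would have to be replaced by a qualitative compactness-and-continuity argument ($\max_j g_j(u_{k_l})\rightarrow 0$ forces every accumulation point of $\{u_{k_l}\}$ in the compact set $\hat{\mathcal{S}}$ into $\mathcal{U}$). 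The projection-optimality observation sidesteps all of this; I would adopt it.
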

\begin{proof}
We have $\{\|z_{{k_p}}-u_{{k_p}}\|_2\}\rightarrow 0$ because 
    \begin{align*}
        0\leq \|z_{{k_p}}-u_{{k_p}}\|_2 \leq \|u_{{k_p}}-u^*\|_2. 
    \end{align*}
The inequality holds because $z_{k}$ is the Euclidean projection of $u_{k}$ onto the uncertainty set $\mathcal{U}$, and we have $\{z_{{k_p}}\}\rightarrow u^*$ because
\begin{align*}
     0 &\leq \|z_{{k_p}}-u^*\|_2 \leq \|z_{{k_p}}-u_{{k_p}}\|_2 + \|u_{{k_p}}-u^*\|_2.
\end{align*}

Because $\mathcal{F}$ is compact and $x_{k_p}\in\mathcal{F}$ for all $k_p$, there exists a subsequence with indices $\{k_l\}$ and $x^*\in\mathcal{F}$ such that $\{x_{k_l}\}\rightarrow x^*$. Further, we have $\{errm(x_{k_l})\}\rightarrow 0$ because of conclusion 1. 
\end{proof}

Following the third conclusion in Theorem~\ref{thm:conv_kelly}, we conclude that with any cutting planes \eqref{eq:kelleycut}, \eqref{eq:eudcut}, or \eqref{eq:eudcut2}, Algorithm~\ref{alg:sw} converges to a stationary solution of \eqref{eq:original}.

\begin{rmk}
    Comparing the convergence analysis of Polak's algorithm \cite{polak_boyd} and our superset algorithm, we see that the main difference is that Polak's algorithm uses an assumption that problem \eqref{eq:feas_err} is solved to global optimality. In our superset algorithm, because we aim to improve the supersets by removing the violated points, we assume that $\mathcal{U}$ is a convex set to take advantage of the cutting plane methods. Note that convexity is a sufficient condition for global optimality, which means that the two sets of assumptions are not significantly different. 
\end{rmk}

\section{Numerical Results}\label{sec:numerical}

We compare the computational performance of the superset algorithm and Polak's algorithm using the following applications: portfolio optimization and production cost minimization. 

\subsection{Simulation Setup}

We implement Polak's and the superset algorithms using Julia 1.6.5 and run the numerical studies on a Linux system with a single-thread, 2.20 GHz Intel(R) Xeon(R) CPU E7-8890 and 16 GB RAM. We use the filterSQP solver \cite{filtersqp} for all the smooth nonlinear optimization problems and Mosek 9.3 \cite{mosek} for the semi-definite programming problem in portfolio optimization. In the superset algorithm, we use the Euclidean projection cut and the gradient-free cut. For termination tolerance, we use $\epsilon=10^{-5}$.

\subsection{Robust Asset Allocation} \label{sec:portfolio}

We consider the following multiperiod asset allocation problem with transaction costs \cite{port1,port2,port_horizon1, port_horizon2}. We define $n$ to be the number of assets and the decision horizon as $t\in\{1,...,T\}$. Coefficient $U$ denotes the time-invariant linear transaction cost that is independent of the asset type. For each period $t$,  $x_t\in\mathbb{R}^n$ represents the allocation decision for each asset, and $\lambda_t>0$ is a predefined risk aversion parameter. Parameters $\mu_t\in\mathbb{R}^n$ and $Q_t\in\mathbb{R}^{n\times n}$ represent the uncertain mean and covariance of the asset return with uncertainty sets $\mathcal{U}_{\mu t}$ and $\mathcal{U}_{Q t}$. Problem \eqref{eq:mp_port} maximizes the total profit from total risk-adjusted expected return $s_t\in\mathbb{R}$ excluding the transaction cost $c_t\in\mathbb{R}$ under the uncertainty of mean and covariance. The left-hand side of \eqref{eq:port_muQ_t} gives the risk-adjusted expected return $s_t$ in period $t$. The left-hand side of \eqref{eq:port_transac} gives the transaction cost $c_t$ associated with the changes on allocation decision $x_t$. The problem contains $T$ robust constraints with $n(n+1)$ uncertainty variables per robust constraint. 
           \begin{subequations}\label{eq:mp_port}
           \begin{align}
          \max_{x_t\geq 0,\ s_t,\ c_t\geq 0}\quad & \sum_{t=1}^T (s_t - c_t) \label{eq:port_obj_t}\\
          \text{s.t}\quad& \mu_t^\top x_t - \lambda x_t^\top Q_t x_t\geq s_t,\ \forall\ \mu_t\in\mathcal{U}_{\mu_t},\ \forall Q_t\in\mathcal{U}_{Q_t},\  \forall t\in\{1,...,T\},\label{eq:port_muQ_t}\\
          &\sum_{i=1}^n x_{it}=1,\ \forall t\in\{1,...,T\},\\
          & U\|x_{t+1}-x_{t}\|_1\leq c_t,\ \forall t\in\{1,...,T\},\label{eq:port_transac}.
          \end{align}
          \end{subequations}
Further, we define uncertainty sets $\mathcal{U}_{\mu t}:=\{\underline{\mu}_{it}\leq\mu_{it}\leq\overline{\mu}_{it},\ i=1,...,n\}$ and $\mathcal{U}_{Qt}:=\{{\color{black}Q\succeq 0},\ \sum_{i,j}\frac{(Q_{ijt}-C_{ijt})^2}{r_{ijt}^2}\leq 1,\ i=1,...,n,\ j=1,...,n,\}$. $C_{ijt}$ and $r_{ijt}$ are the corresponding center and radius for entry $Q_{ijt}$. 
The uncertainty set $\mathcal{U}_{\mu t}$ and $\mathcal{U}_{Qt}$ are constructed by using the Australian stock price dataset \cite{port_data}. In our numerical study we choose three cases among $T\in\{7, 14, 21\}$ and $n = 2$ (i.e., a total uncertainty dimension of $6T$) with transaction cost selection $U\in\{0.05,0.15,0.25,0.35\}$. Because $\lambda_t$ does not interfere with the problem dimension, we use $\lambda_t = 1$ for simplicity in these computational tests. For each given horizon and asset setting, we randomly select $3$ groups of assets and demonstrate the average computational performance across the groups. 


\begin{table}[ht]
\setlength{\tabcolsep}{.16667em}
     \caption{Average iteration and runtime of Polak's and the superset algorithms for $T=\{7, 14, 21\}$.}
\centering
    \begin{subtable}[h]{1\textwidth}
       \caption{Runtime (Sec).}
    \centering
 \begin{tabular}{c|cccccc}
\hline
 & \multicolumn{6}{c}{Horizon}                                                                                                                                                                                                                                          \\ 
                                                                              & \multicolumn{2}{c}{$T = 7$}                                                                                                                         & \multicolumn{2}{c}{$T = 14$} &  \multicolumn{2}{c}{$T = 21$}                                                                                                \\ \hline \hline
                                                                            \begin{tabular}[c]{@{}c@{}}Transaction\\ Cost $U$\end{tabular}  & \begin{tabular}[c]{@{}c@{}}Polak's \\ Algorithm\end{tabular} & \multicolumn{1}{c|}{\begin{tabular}[c]{@{}c@{}}Superset \\ Algorithm\end{tabular}} & \begin{tabular}[c]{@{}c@{}}Polak's \\ Algorithm\end{tabular} & \multicolumn{1}{c|}{\begin{tabular}[c]{@{}c@{}}Superset \\ Algorithm\end{tabular}}
                                                                            & \begin{tabular}[c]{@{}c@{}}Polak's \\ Algorithm\end{tabular} & \begin{tabular}[c]{@{}c@{}}Superset \\ Algorithm\end{tabular}
                                                                            \\ \hline
0.05                                                                             & 1.23                                                          & \multicolumn{1}{c|}{1.55}                                                              & 2.22                                                          & \multicolumn{1}{c|}{1.82} & 4.48 & 1.83                                                             \\ \hline
0.15                                                                            & 0.20                                                          & \multicolumn{1}{c|}{0.13}                                                              & 1.08                                                          & \multicolumn{1}{c|}{0.31} & 2.63 & 0.35                                                             \\ \hline
0.25                                                                            & 0.20                                                          & \multicolumn{1}{c|}{0.11}                                                              & 1.08                                                          & \multicolumn{1}{c|}{0.22} & 2.54 & 0.58                                                            \\ \hline
0.35                                                                            & 0.08                                                          & \multicolumn{1}{c|}{0.06}                                                              & 0.27                                                          & \multicolumn{1}{c|}{0.13} & 0.68 & 0.28                                                            \\ \hline
\end{tabular}
       \label{tab:lowh_lows2}
    \end{subtable}
    \newline
    \vspace{0.3cm}
    \newline
    \begin{subtable}[h]{1\textwidth}
    \caption{Iteration Count.}
        \centering
\begin{tabular}{c|cccccc}
\hline
 & \multicolumn{6}{c}{Horizon}                                                                                                                                                                                                                                          \\ 
                                                                              & \multicolumn{2}{c}{$T = 7$}                                                                                                                         & \multicolumn{2}{c}{$T = 14$} &  \multicolumn{2}{c}{$T = 21$}                                                                                                \\ \hline \hline
                                                                            \begin{tabular}[c]{@{}c@{}}Transaction\\ Cost $U$\end{tabular}  & \begin{tabular}[c]{@{}c@{}}Polak's \\ Algorithm\end{tabular} & \multicolumn{1}{c|}{\begin{tabular}[c]{@{}c@{}}Superset \\ Algorithm\end{tabular}} & \begin{tabular}[c]{@{}c@{}}Polak's \\ Algorithm\end{tabular} & \multicolumn{1}{c|}{\begin{tabular}[c]{@{}c@{}}Superset \\ Algorithm\end{tabular}}
                                                                            & \begin{tabular}[c]{@{}c@{}}Polak's \\ Algorithm\end{tabular} & \begin{tabular}[c]{@{}c@{}}Superset \\ Algorithm\end{tabular}
                                                                            \\ \hline
0.05                                                                             & 3.0                                                          & \multicolumn{1}{c|}{7.0}                                                              & 4.3                                                          & \multicolumn{1}{c|}{5.7} & 7.0 & 4.0                                                             \\ \hline
0.15                                                                            & 3.0                                                          & \multicolumn{1}{c|}{3.3}                                                              & 3.0                                                          & \multicolumn{1}{c|}{3.7} & 3.7 & 2.3                                                             \\ \hline
0.25                                                                            & 3.0                                                          & \multicolumn{1}{c|}{3.3}                                                              & 4.7                                                          & \multicolumn{1}{c|}{2.3} & 4.7 & 4.0                                                            \\ \hline
0.35                                                                            & 3.0                                                          & \multicolumn{1}{c|}{1.3}                                                              & 3.0                                                          & \multicolumn{1}{c|}{1.3} & 3.0 & 2.7                                                            \\ \hline
\end{tabular}
        \label{tab:lowh_highs2}
     \end{subtable}
     \label{tab:port_low2}
\end{table}

The results are summarized in Table~\ref{tab:port_low2}. We observe that Polak's algorithm requires more iterations and runtime compared to the superset algorithm when the horizon increases (i.e., more robust constraints in \eqref{eq:mp_port}). As the number of robust constraints increases, more nonlinear constraints are added in Polak's subproblem \eqref{eq:Polak_fin} as iteration goes, while the constraint dimension of the superset algorithm remains the same. As a result, when the horizon increases, the superset algorithm demonstrates a steady computational performance and an increasing advantage over the Polak's algorithm. In terms of the transaction cost change, when $U$ increases, we observe reductions on the iteration count and runtime for both algorithms.

To summarize the computational difference between Polak's algorithm and the superset algorithm, we provide the performance profile \cite{performprofile} of the CPU runtime in Fig.~\ref{fig:performance_profile_all}). We conclude that the superset algorithm outperforms Polak's algorithm in $92\%$ of the instances. Specifically, for $52\%$ of the cases, the superset algorithm is more than 2.0 times faster than the Polak's algorithm; for $27\%$ of the cases, the superset algorithm is more than 3 times faster; for $17\%$ of the cases, the superset algorithm is more than 4 times faster.

\begin{figure}[H]
\centering
\includegraphics[width=3.5in]{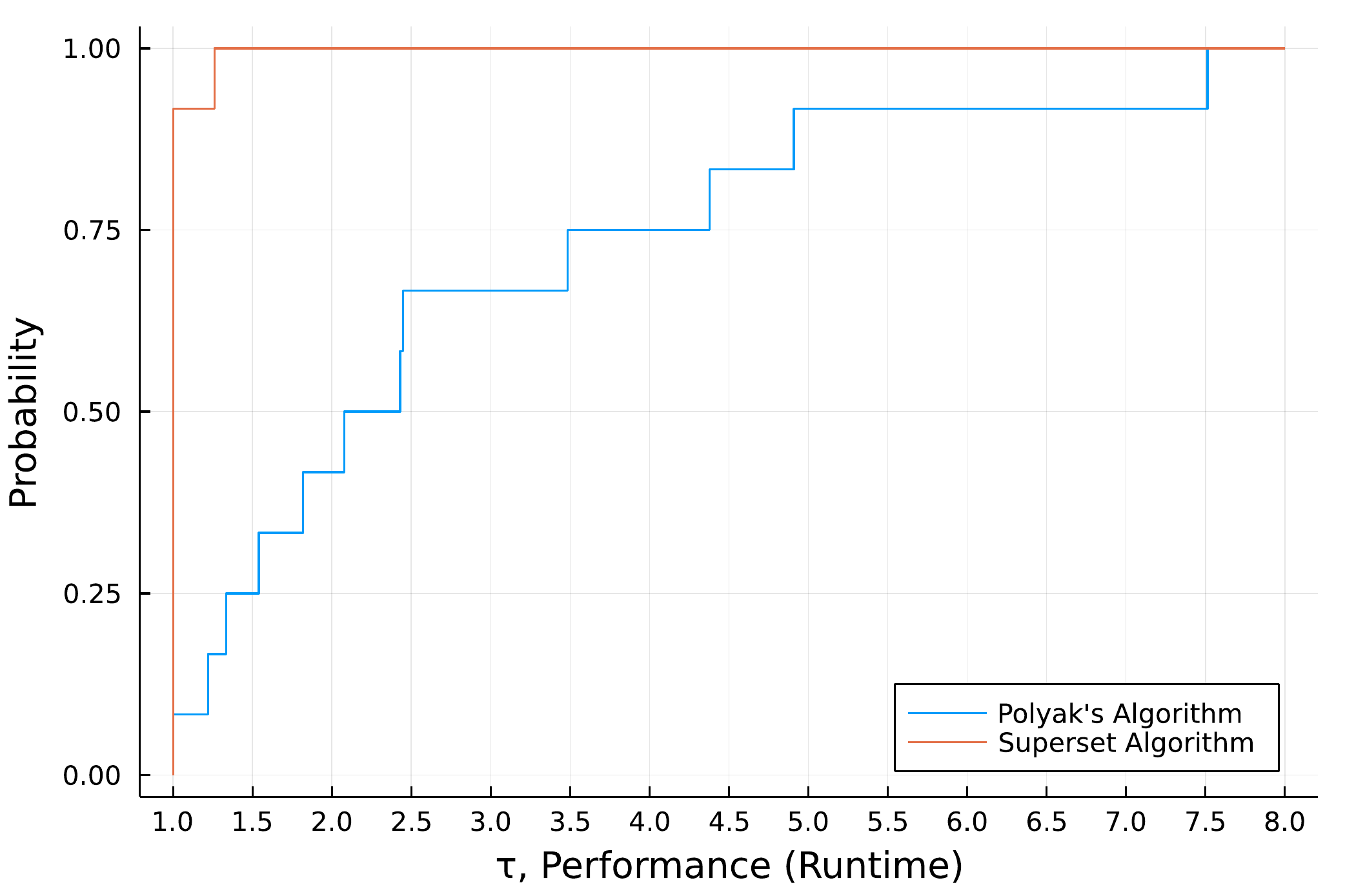}
\caption{Performance profile of CPU runtime comparison: the superset algorithm wins $92\%$ of the multiperiod portfolio optimization instances.}
\label{fig:performance_profile_all}
\end{figure}

\subsection{Power System Application: Production Cost }\label{sec:prod_cost}
We consider a production cost minimization problem  for a given horizon $t\in\{1,...,T\}$ between two generators with well-forecast load $d_t$ and uncertain cost \cite{prod_cost}. We assume the first generator is expensive with ramping costs and limits, whereas the second generator is cheap and has no ramping limitation. For each period $t$, we denote the productions of the first generator as $x_t$ with cost $c_{1,t}$ and the ramp rate $u_t$ with control cost $c_{3,t}$. Because of power balance at each period $t$, we denote the production of the second generator as $d_t-x_t$ with cost $c_{2,t}$. For each $t$, we further define the uncertainty set as $\mathcal{C}_t$. The full problem is formulated as follows: 
    \begin{subequations}
     \begin{align}
        \min_{x_{t},\ u_{t},\ s_{t}}\quad &\sum_{t=1}^T s_t\\
        \text{s.t}\quad &c_{1,t}(x_t-d_t)^2 + c_{2,t}x_t^2 \notag\\
        &\quad\quad +c_{3,t}u_t^2\leq s_t, \ \forall(c_{1,t},c_{2,t},c_{3,t})\in\mathcal{C}_t,\forall t\in\{1,...,T\}\label{eq:prod_obj}\\
        &x_{t+1}=x_t+u_t,\ \forall t\in\{1,...,T\}, \label{eq:ramp_1}\\
        & -U\leq u_t\leq U,\ \forall t\in\{1,...,T\}, \label{eq:ramp_2}
    \end{align}
    \end{subequations}
where the left-hand side of \eqref{eq:prod_obj} represents the cost at time $t$ as $s_t$ and \eqref{eq:ramp_1} and \eqref{eq:ramp_2} represent the ramp dynamic and limit $U$, respectively. In the simulation we fix $T=24$, and the problem contains $73$ variables with $24$ robust constraints and $48$ deterministic constraints. 
    
For $d_t$, we use the seven-day load forecast dataset from PJM \cite{pjm_data} and select three daily load-forecast patterns from Dec. 2021 (see Fig.~\ref{fig:load_pattern}). For uncertainty sets $\mathcal{C}_t$, we first define the nominal costs $o_{1}=10$, $o_{2}=5$, $o_{3}=1$ and then construct ellipsoidal uncertainty sets as $\frac{(c_{1,t}-o_1)^2}{r_1^2}+\frac{(c_{2,t}-o_2)^2}{r_2^2}+\frac{(c_{3,t}-o_3)^2}{r_3^2}\leq 1$ with $r_{i} = po_{i},\ i=1,2,3$, where $p\in(0,1]$ is a percentage parameter controlling the size of the ellipsoid. As the comparison setup, we use 3 load patterns, 3 different values of $U\in\{80,120,160\}$, and 3 different values of $p\in\{0.7,0.8,0.9\}$ and in total run 27 instances with every combination. 


\begin{figure}[htb]
\centering
\includegraphics[width=3.5in]{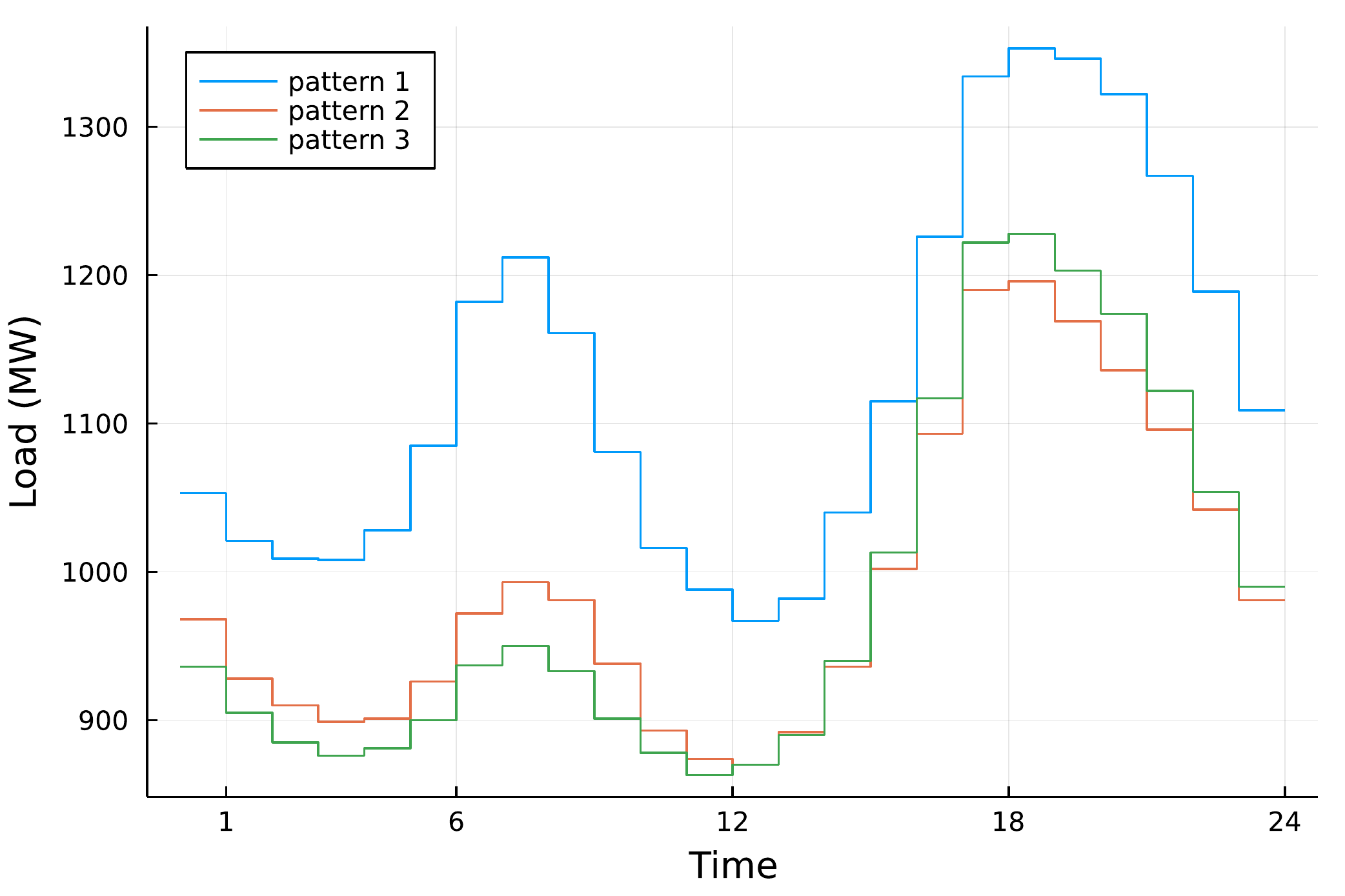}
\caption{Three daily load patterns used in the testing.}
\label{fig:load_pattern}
\end{figure}

First, we show the iteration count comparison. The iteration count of all instances is summarized in Table~\ref{tab:iter}. In all cases, the superset algorithm takes fewer iterations than Polak's algorithm does. Both methods have stable iteration count regardless of the  changes on load pattern, ramp limits, and the size of the uncertainty set.

\begin{table}[htb]
\caption{Iteration count comparison for all testing instances.}
\centering
\resizebox{\columnwidth}{!}{
\begin{tabular}{c|c|cccccc}
\hline
 &  & \multicolumn{6}{c}{Ramp Rate $U$}                                                          \\  
                              &                                          & \multicolumn{2}{c}{$80$}    & \multicolumn{2}{c}{$120$}    & \multicolumn{2}{c}{$160$} \\ \hline\hline 
                 \begin{tabular}{c}Load \\Pattern\end{tabular}             &    \begin{tabular}{c}Uncertainty\\ Set Size $p$\end{tabular}                                      & \begin{tabular}[c]{@{}c@{}}Polak's \\ Algorithm\end{tabular} & \multicolumn{1}{c|}{\begin{tabular}[c]{@{}c@{}}Superset \\ Algorithm\end{tabular}} & \begin{tabular}[c]{@{}c@{}}Polak's \\ Algorithm\end{tabular} & \multicolumn{1}{c|}{\begin{tabular}[c]{@{}c@{}}Superset \\ Algorithm\end{tabular}} & \begin{tabular}[c]{@{}c@{}}Polak's \\ Algorithm\end{tabular}          & \begin{tabular}[c]{@{}c@{}}Superset \\ Algorithm\end{tabular}          \\ \hline
1                            & $0.7$                                    & 10  & \multicolumn{1}{c|}{6}  & 10  & \multicolumn{1}{c|}{6}  & 10           & 6           \\
1                              & $0.8$                                    & 10  & \multicolumn{1}{c|}{6}  & 10  & \multicolumn{1}{c|}{6}  & 10           & 6           \\
1                             & $0.9$                                    & 10 & \multicolumn{1}{c|}{6} & 10 & \multicolumn{1}{c|}{6}  & 10          & 6           \\ \hline
2                             & $0.7$                                    & 10  & \multicolumn{1}{c|}{6}  & 10 & \multicolumn{1}{c|}{6}  & 10           & 6           \\
 2                            & $0.8$                                    & 10  & \multicolumn{1}{c|}{6}  & 10  & \multicolumn{1}{c|}{6}  & 10           & 6           \\
 2                             & $0.9$                                    & 10 & \multicolumn{1}{c|}{6} & 10 & \multicolumn{1}{c|}{6}  & 10          & 6           \\ \hline
 3                            & $0.7$                                    & 10  & \multicolumn{1}{c|}{6}  & 10  & \multicolumn{1}{c|}{6}  & 10           & 6           \\
 3                            & $0.8$                                    & 10  & \multicolumn{1}{c|}{6} & 10  & \multicolumn{1}{c|}{6}  & 10           & 6           \\
 3                            & $0.9$                                    & 10 & \multicolumn{1}{c|}{6} & 10 & \multicolumn{1}{c|}{6}  & 10          & 6           \\\hline
\end{tabular}
}
\label{tab:iter} 
\end{table}


Next, we show the runtime comparison. The runtimes for all instances are summarized in Table~\ref{tab:time}.  In all cases, superset algorithm takes less CPU time than Polak's algorithm does. Given the stable iteration count in Table~\ref{tab:iter}, superset algorithm also has stable CPU time regardless of the changes on test settings. On the other hand, the CPU time of Polak's algorithm is greatly dependent on the load patterns, ramp limits and the size of uncertainty set but no general monotonic relation can be observed.   

\begin{table}[htb]
\caption{Runtime (sec) comparison for all testing instances.}
\centering
\resizebox{\columnwidth}{!}{
\begin{tabular}{c|c|cccccc}
\hline
 &  & \multicolumn{6}{c}{Ramp Rate $U$}                                                          \\ 
                              &                                          & \multicolumn{2}{c}{$80$}    & \multicolumn{2}{c}{$120$}    & \multicolumn{2}{c}{$160$} \\ \hline\hline  \begin{tabular}{c}Load \\Pattern\end{tabular}
                              &      \begin{tabular}{c}Uncertainty\\ Set Size $p$\end{tabular}                                    & \begin{tabular}[c]{@{}c@{}}Polak's \\ Algorithm\end{tabular} & \multicolumn{1}{c|}{\begin{tabular}[c]{@{}c@{}}Superset \\ Algorithm\end{tabular}} & \begin{tabular}[c]{@{}c@{}}Polak's \\ Algorithm\end{tabular} & \multicolumn{1}{c|}{\begin{tabular}[c]{@{}c@{}}Superset \\ Algorithm\end{tabular}} & \begin{tabular}[c]{@{}c@{}}Polak's \\ Algorithm\end{tabular}          & \begin{tabular}[c]{@{}c@{}}Superset \\ Algorithm\end{tabular}          \\ \hline
1                            & $0.7$                                    & 0.767  & \multicolumn{1}{c|}{0.748}  & 1.653  & \multicolumn{1}{c|}{0.744}  & 1.430           & 0.742           \\
1                              & $0.8$                                    & 0.873  & \multicolumn{1}{c|}{0.715}  & 1.913  & \multicolumn{1}{c|}{0.725}  & 0.972           & 0.716           \\
1                             & $0.9$                                    & 0.806 & \multicolumn{1}{c|}{0.731} & 1.271 & \multicolumn{1}{c|}{0.760}  & 1.000          & 0.750           \\ \hline
2                             & $0.7$                                    & 1.230  & \multicolumn{1}{c|}{0.715}  & 1.095 & \multicolumn{1}{c|}{0.743}  & 1.704           & 0.737           \\
 2                            & $0.8$                                    & 1.192  & \multicolumn{1}{c|}{0.703}  & 1.244  & \multicolumn{1}{c|}{0.711}  & 1.480           & 0.706           \\
 2                             & $0.9$                                    & 0.766 & \multicolumn{1}{c|}{0.755} & 1.313 & \multicolumn{1}{c|}{0.749}  & 1.323          & 0.750           \\ \hline
 3                            & $0.7$                                    & 1.703  & \multicolumn{1}{c|}{0.721}  & 1.139  & \multicolumn{1}{c|}{0.721}  & 1.126           & 0.727           \\
 3                            & $0.8$                                    & 0.908  & \multicolumn{1}{c|}{0.708} & 1.617  & \multicolumn{1}{c|}{0.696}  & 1.301           & 0.684           \\
 3                            & $0.9$                                    & 1.112 & \multicolumn{1}{c|}{0.748} & 1.019 & \multicolumn{1}{c|}{0.745}  & 1.729          & 0.740           \\\hline
\end{tabular}
}
\label{tab:time}
\end{table}

To summarize the computational difference between Polak's algorithm and the superset algorithm, we provide the performance profile \cite{performprofile} of the CPU runtime in Fig.~\ref{fig:performance_profile_time}. We observe that, in all cases, the superset algorithm outperforms Polak's algorithm. Specifically, for $59\%$ of the cases, superset algorithm is more than 1.5 times faster than the Polak's algorithm; for $22\%$ of the cases, superset algorithm is more than 2 times faster than the Polak's algorithm.  

\begin{figure}[H]
\centering
\includegraphics[width=3.5in]{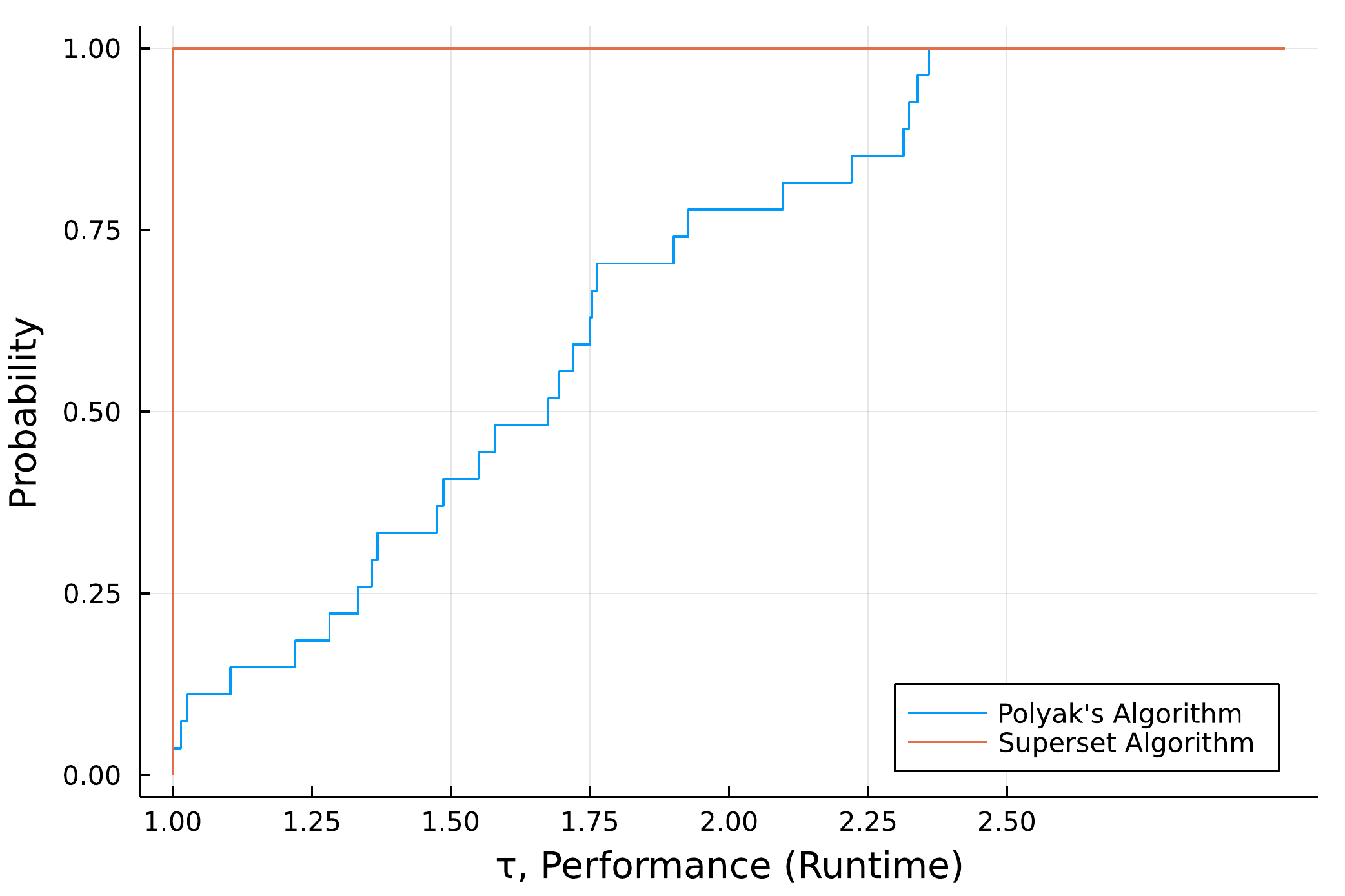}
\caption{Performance profile of CPU runtime comparison: the superset algorithm is superior in all the production cost minimization instances.}
\label{fig:performance_profile_time}
\end{figure}

\section{Conclusion}\label{sec:conclusion}

We have developed a superset algorithm for a class of structured NRO problems. The algorithm iteratively solves the reformulation of an NRO subproblem with the polytopic supersets of the uncertainty set. Different cutting plane methods are proposed to improve the supersets over iteration. We showed that the solution iterates from the superset algorithm are feasible in the original NRO problem and provide both lower and upper bounds to the optimal objective value. We proved the convergence of the superset algorithm under the assumption that uncertainty sets are convex. We also provided a feasibility restoration algorithm to detect whether the NRO is infeasible or restore the feasibility of the NRO subproblem of the superset algorithm by constructing a new superset.  

To evaluate the computational performance, we compared the superset algorithm with Polak's algorithm in applications including portfolio optimization and production cost minimization. We demonstrated that the superset algorithm is more advantageous than Polak's algorithm when the number of robust constraints is large.

For future work, we plan to extend the superset algorithm to more general NRO formulations by relaxing the current structural assumptions of the affine relationship between the constraint and uncertainty parameter. 

\backmatter





\bmhead{Acknowledgments}

This work was supported by the Applied Mathematics activity within the U.S. Department of Energy, Office of Science, Advanced Scientific Computing Research, under Contract DE AC02-06CH11357.

\bibliography{sn-bibliography}


\end{document}